\DeclareMathOperator{\reach}{\mathrm{reach}}
\DeclareMathOperator{\vect}{\mathrm{Vect}}
\DeclareMathOperator{\Id}{\mathrm{Id}}
\DeclareMathOperator{\clarke}{\partial^* \!}
\DeclareMathOperator{\Conv}{\mathrm{Conv}}
\DeclareMathOperator{\Nor}{\mathrm{Nor}}
\DeclareMathOperator{\Tan}{\mathrm{Tan}}
\DeclareMathOperator{\dgm}{\mathrm{dgm}}
\DeclareMathOperator{\im}{\mathrm{im}}
\DeclareMathOperator{\one}{\mathbb{1}}
\DeclareMathOperator{\Vol}{\mathrm{Vol}}
\newcommand{\Sphere}{\mathbb{S}}
\DeclareMathOperator{\diff}{\mathrm{d} \!}
\DeclareMathOperator{\tq}{ \, | \,  }
\newcommand{\R}{\mathbb{R}}
\newcommand{\N}{\mathbb{N}}
\newcommand{\module}[1]{\left\lvert #1 \right\rvert}
\newcommand{\norme}[1]{\left\lvert \left \lvert #1 \right \rvert \right \rvert}
\newcommand{\eps}{\varepsilon}
\newcommand{\complementaire}[1]{^\neg #1}
\newcommand{\scal}[2]{ \langle #1 , #2 \rangle}
\newtheorem{theorem}{Theorem}[section]
\newtheorem*{theorem*}{Theorem}
\newtheorem{corollary}[theorem]{Corollary}
\newtheorem{lemma}[theorem]{Lemma}
\newtheorem{definition}[theorem]{Definition}
\newtheorem{proposition}[theorem]{Proposition}
\newtheorem{remark}[theorem]{Remark}
\providecommand{\keywords}[1]
{
  \small	
  \textbf{Keywords ---} #1
}
\providecommand{\msc}[1]
{
  \small	
  \textbf{MSC Classification ---} #1
}
\title{Persistent Intrinsic Volumes}
\begin{document}

\author[1]{David Cohen-Steiner}
\author[1,2]{Antoine Commaret}
\affil[1]{\small \textit{Centre INRIA d'Université Côte d'Azur, Valbonne, France}}
\affil[2]{\small \textit{Laboratoire Jean-Alexandre Dieudonné, Université Côte d'Azur, Nice, France}}

\maketitle

\begin{abstract}
We develop a new method to estimate the area, and more generally the intrinsic volumes, of a compact subset $X$ of $\R^d$ from a set $Y$ that is close in the Hausdorff distance. This estimator enjoys a linear rate of convergence as a function of the Hausdorff distance under mild regularity conditions on $X$. Our approach combines tools from both geometric measure theory and persistent homology, extending the noise filtering properties of persistent homology from the realm of topology to geometry. Along the way, we obtain a stability result for intrinsic volumes.
\end{abstract}

\keywords{Geometric Measure Theory, Intrinsic Volumes, Lipschitz-Killing Curvatures, Quermassintegrals, Persistence Homology, Geometric Inference.}

\msc{53C65, 28A75, 55N31, 49Q15.}
\emergencystretch 3em
\section{Introduction}

Geometric inference deals with the retrieval of geometric characteristics of an unknown shape $X$ from a finite sample of $X$. Among such geometric characteristics, the \emph{intrinsic volumes} $V_0(X), \dots, V_d(X)$ of a subset $X$ of a Euclidean space $\R^d$ are of particular importance. Indeed, these quantities enjoy general properties such as additivity, isometry invariance and positive homogeneity, making them relevant in various fields such as integral geometry, where intrinsic volumes are known as \emph{quermassintegrals}, or in differential geometry 
where they are called \emph{Lipschitz-Killing curvatures}. Notably, the codimension 1 intrinsic volume $V_{d-1}(X)$ is a multiple of the boundary area of $X$, whose estimation is an important question in geometric inference. It \color{black} is of practical relevance in a large swath of scientific and industrial domains. For example, biologists seek to measure the area of tissues such as lungs \cite{LungSurface}, bones \cite{NecrosisSurface}, blood vessels \cite{VascularSurface} or brains \cite{BrainSurface}. Applications in other fields such as chemistry, geology, engineering are so numerous that they are difficult to summarize. From a mathematical perspective, we want to understand under which conditions and at which rate we are able to retrieve the intrinsic volumes of $X$ from an approximating set $Y$. In order to achieve this, we introduce a novel method to estimate the intrinsic volumes of $X$ using techniques from persistent homology and geometric measure theory. This allows to weaken the regularity conditions required in the current literature. Furthermore, we obtain rates of convergence with respect to the Hausdorff distance $d_H(X,Y)$, allowing $X$ to be approximated by point clouds or voxels.

Intrinsic volumes were first defined for convex subsets of $\R^d$  \cite{steiner, konvexen_korper} and sets whose boundary is a smooth submanifold of $\R^d$ \cite{weyl}. In either case, while
$V_d(X) = \mathcal{H}^{d}(X)$ coincides with the volume of $X$, the remaining intrinsic volumes $V_i(X)_{0 \leq i \leq d-1}$ depend on the curvature of its boundary. If $X$ is an open set of $\R^d$ bounded by a hypersurface, denote by $(\kappa_i(x))_{1 \leq i \leq d-1}$ its principal curvatures at $x \in \partial X$. Then, for all $0 \leq i \leq d-1$:
\[ V_i(X) = \frac{1}{(d-i) \omega_{d-i}} \int_{\partial X} \Sigma_{d-i-1}(\kappa_1, \dots, \kappa_{d-1}) d \omega (x) \]
where $\Sigma_i$ is the elementary symmetric polynomial in $d-1$ variables of degree $i$, $\omega_i$ the $i$-th volume of the unit ball in dimension $i$ and $d \omega$ the canonical volume form of $\partial X$. In particular, up to multiplicative constants, $V_{d-1}(X)$ is the area of $\partial X$, $V_{d-2}(X)$ is the integral of the mean curvature and $V_0(X)$ is the integral of the Gauss curvature which by the Gauss-Bonnet theorem is the Euler characteristic $\chi(X)$ of $X$. Similar formulas exist when $X$ is a hypersurface of dimension $k$, in which case $V_k(X)$ is a multiple of its area.   
When $X$ is a convex polytope, the $i$-th intrinsic volume is the sum of the volumes of its $i$-dimensional facets multiplied by their appropriately normalized exterior cone angle.

The problem of extending the definition of intrinsic volumes to broader classes of sets was first posed by Milnor \cite{steiner_volumes}. Since then, consistent definitions were given for Whitney-stratified sets \cite{IntegralGeometryTameSets}, sets in a o-minimal structure \cite{Bernig}, and general unions of sets with positive reach \cite{ZahleUnionReach} using respectively stratified Morse theory, o-minimal axioms and geometric measure theory. Fu \cite{FuSubAnalytic} gave general conditions for sets to have well-defined intrinsic volumes and showed that those conditions hold for subanalytic sets. Most of these definitions employ the theory of \textit{normal cycles} developed in \cite{ZahleCurrent}, which uses the theory of currents \cite{GMT}. In our framework we give a definition of intrinsic volumes for offsets of sets with positive $\mu$-reach without using the language of currents, the $\mu$-reach being a relaxed version of the reach of Federer \cite{FedererCurvature} (see \Cref{def:mu_reach}).

\paragraph{Previous work} Previous results focused mostly on the estimation of the area $V_{d-1}(X)$. One approach is to estimate the area by the area of a piecewise linear recontruction of the data. For example, the tangent complex triangulation \cite{TangentComplex} guarantees that this estimator converges to $V_{d-1}(X)$ at a linear rate in the Hausdorff distance $d_H(X,Y)$ when $X$ is a smooth submanifold of $\R^d$ and $Y$ a noise-free sample.
Using Crofton's formula, \cite{Aaron} builds an estimator for the surface area $\mathcal{H}^{d-1}(\partial X)$ from a point cloud sample $Y$ and obtains a square-root rate of convergence
$O(d_H(X,Y)^{1/2})$ in the general case. Other works have focused on the retrieval of \textit{curvatures measures}, which are local, more informative versions of the intrinsic volumes. A convergence rate of $d_H(X,Y)^{1/2}$ for the curvature measures was obtained in \cite{BoundaryMeasures} under the condition $d_H(X,Y) \leq \reach(X)$. 
Chazal et al. \cite{ChazalStability} obtain the convergence of the curvature measures of $Y^r$ to that of $X^r$ at a square root rate, for any fixed $r > 0$, where $Z^{r} = \{ x \in \R^d \tq d_Z(x) \leq r\}$ is the $r$-offset of any subset $Z$ of $\R^d$. 

Linear convergence rates were obtained for the estimation of the first intrinsic volume using persistent homology of height functions and Crofton's formula.
Authors in \cite{CurveInequalities} showed a linear rate of convergence of $V_{1}(Y)$ to $V_{1}(X)$ with respect to the Fréchet distance between $X$ and $Y$ when they are both compact surfaces of $\R^3$ or both curves in $\R^d$ assuming their total absolute curvature is bounded.
Building on these ideas, Edelsbrunner et al. \cite{IntrinsicEdelsbrunner} obtained more recently a linear rate of convergence for the first intrinsic volume of voxelizations of smooth sets, in addition to showing the convergence of all intrinsic volumes of spheres voxelized with ever increasing precision. 

Another line of research has focused on non-deterministic geometric inference from uniform samples of convex sets. Notably, the authors of \cite{Convex1, Convex2} worked with convex sets with a $C^{k}$ boundary, where $k \geq 2$. The expected intrinsic volumes of the convex hull of the sample were shown to converge each to the ideal intrinsic volume at a rate of $C_X n^{-2/(d+1)}$ where $n$ is the number of sample points and $C_X$ a constant depending on $X$.  It was also proven that this result does not hold with a mere $C^1$ boundary condition, suggesting that finding an estimator that is robust to the lack of regularity is difficult.
 
\paragraph{Contributions} In this paper, we define quantities $V^{\eps}_i(Y)$ depending on a parameter $\eps$, that approach $V_i(X)$ at a linear rate in $d_H(X,Y)$ assuming only mild regularity conditions on $X$. This rate is easily seen to be optimal. To the best of our knowledge, these are the first estimators that come with theoretical guarantees beyond sets with positive reach. Even for the basic problem of estimating the boundary area of 3-dimensional object with reach zero, we are not aware of any other provably correct method. 
 
 \begin{theorem*}[Main Results]
 Let $X, Y$ be two compact sets of $\R^d$ and let $\mu \in (0,1], \eps > 0$ be such that $d_H(X,Y) \leq \eps \leq \frac{1}{4} \reach_{\mu}(X)$.
 Then we have:
 \begin{equation}
\module{V^{\eps}_i(Y) - V_i(X^{2\eps})} \leq C_d K(X^{2\eps}) \frac{\eps}{\mu}, 
 \end{equation}
where $C_d$ is a constant depending on $d$, $K(X^{2\eps}) \coloneqq \mathcal{H}^d(X^{2\eps}) + \mathcal{H}^{d-1}(\Nor(X^{2\eps} ))$ and $\Nor(X^{2\eps})$ is the unit normal bundle of $X^{2\eps}$.

 Further assuming that $\reach(X) > 0$, we prove a linear rate of convergence for the intrinsic volumes of offsets:
\begin{equation}
\module{V_i(X) - V_i(X^{2\eps})} \leq C_d \left( K(X) + K(X^{2\eps}) \right )\frac{\eps}{\mu}.
\end{equation}
 \end{theorem*}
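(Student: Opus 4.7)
The proof will hinge on the normal cycle representation of intrinsic volumes: for a set $Z$ with positive reach, $V_i(Z) = \int_{\Nor(Z)} \phi_i$ where $\phi_i$ is a universal curvature form. Standard results on the $\mu$-reach (à la Chazal--Cohen-Steiner--Lieutier) show that the hypothesis $\eps \leq \frac{1}{4}\reach_\mu(X)$ implies $\reach(X^{2\eps}) > 0$, so this representation applies to the right-hand side of (1). The first task is to recast the estimator $V^{\eps}_i(Y)$ in the same form, as a curvature integral over a persistence-filtered subset of the unit normal bundle of an offset of $Y$; the filtering is designed to discard unit normals whose base points correspond to features of the offset filtration of $Y$ that fail to be $\eps$-persistent.

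The proof of (1) then proceeds by splitting the two normal bundles into a \emph{matched} part and a \emph{noisy} part. On the matched part, stability of persistent homology combined with $d_H(X^{2\eps}, Y^{2\eps}) \leq \eps$ provides a correspondence under which footpoints agree up to $O(\eps)$ and outer normals up to $O(\eps/\mu)$ (the $\mu$-reach controlling the spread of normal cones of the offset). Integrating the difference of the curvature forms over the matched portion gives a contribution bounded by $C_d K(X^{2\eps})\,\eps/\mu$. The noisy part is discarded on the $Y$-side by the persistence threshold; on the $X$-side, the $\mu$-reach hypothesis must be used to show that the corresponding subset of $\Nor(X^{2\eps})$ has $\mathcal{H}^{d-1}$-measure of the same order. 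I expect the main obstacle to be precisely this measure bound together with controlling $\phi_i$ near non-smooth strata of the offset; both should follow from a coarea-type argument once one exploits that, outside a $\mu$-cone of normal directions, the nearest point projection is Lipschitz with constant roughly $1/\mu$.

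Inequality (2) then follows by specializing (1) to $Y = X$ and using the triangle inequality. When $\reach(X) > 0$ the same persistence-based construction makes sense directly on $X$, and an argument entirely parallel to that of (1) — now comparing $\Nor(X)$ with the filtered normal bundle of the offset of $X$ — yields $|V^{\eps}_i(X) - V_i(X)| \leq C_d K(X)\,\eps/\mu$. Adding this bound to (1) applied with $Y = X$ produces (2). In this second step the geometric work is easier than in (1) because there is no discrepancy between the input set $Y$ and the approximated set $X$; all the error comes from the thin tube of width $2\eps$ relating $\Nor(X)$ to $\Nor(X^{2\eps})$, which one bounds using the bi-Lipschitz character of the normal flow of a set of positive reach.
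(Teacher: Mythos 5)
Your proposal takes a fundamentally different route from the paper, and the route has two gaps that I believe are fatal. First, you assume that $V^{\eps}_i(Y)$ can be ``recast as a curvature integral over a persistence-filtered subset of the unit normal bundle of an offset of $Y$.'' No such representation is available: the estimator is defined by integrating the persistent Euler characteristic $\chi(D^{\eps,x}_Y(r))$ of the image persistence of $d_x$ for the inclusion $Y^{\eps}\subset Y^{3\eps}$ over $x\in\R^d$, and then extracting polynomial coefficients by $L^2([0,R])$-projection (Legendre polynomials). The whole point of passing through the principal kinematic formula, which writes $Q_{X^{2\eps}}(r)=\int_{\R^d}\chi(X^{2\eps}\cap B(x,r))\,\mathrm{d}x$, is to avoid ever comparing normal bundles of $Y$-offsets and $X$-offsets directly: the comparison is done at the level of persistence diagrams, where the image-persistence stability theorem gives $d_B(D^{\eps,x}_Y,D^x_{X^{2\eps}})\leq 2\eps/\mu$ \emph{and} an injection of diagrams, and the $\chi$-averaging lemma converts this into an $L^1$ bound on $Q^{\eps}_Y-Q_{X^{2\eps}}$ controlled by $\int_{\R^d}N_0^R(D^x_{X^{2\eps}})\,\mathrm{d}x\leq \Vol(X^{2\eps})+M_R(X^{2\eps})$ (a Morse-theory plus coarea argument). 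Second, and more decisively, your matched/noisy decomposition with footpoints agreeing to $O(\eps)$ and normals to $O(\eps/\mu)$ cannot hold: under a Hausdorff perturbation of size $\eps$, outer normals generically deviate by order $\sqrt{\eps}$ (an oscillation of amplitude $\eps$ and wavelength $\sqrt{\eps}$), which is exactly why the prior curvature-measure results cited in the introduction only achieve square-root rates. Integrating the difference of the curvature forms over your matched part would therefore give $O(\sqrt{\eps})$, not the claimed linear rate.

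The reduction of (2) is also not as you describe. Specializing (1) to $Y=X$ gives $\module{V^{\eps}_i(X)-V_i(X^{2\eps})}$, so the triangle inequality requires $\module{V^{\eps}_i(X)-V_i(X)}=O(\eps/\mu)$, and there is no argument ``entirely parallel to (1)'' for this: the paper does not go through the estimator at all for (2). Instead it interleaves the persistence modules of $d_x$ on $X^{2\eps}$ and on $X^{\delta}$ via the Lipschitz flow of \Cref{thm:flow}, applies the \emph{two-sided} version of the averaging lemma (no injection of diagrams is available here, which is why both $K(X)$ and $K(X^{2\eps})$ appear in the bound), and then lets $\delta\to 0$. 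That limit is precisely where $\reach(X)>0$ is used: the tube formula gives $V_i(X^{\delta})\to V_i(X)$, and a change of variables along $h_{\delta}:(x,n)\mapsto(x+\delta n,n)$ gives $M_R(X^{\delta})\to M_R(X)$. Your final paragraph replaces this limiting argument with an appeal to the ``bi-Lipschitz character of the normal flow,'' which does not address why the intrinsic volumes and curvature masses of the offsets converge to those of $X$ --- a point the paper can only conjecture beyond the positive-reach (or subanalytic) setting.
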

It is worth noting that the second claim holds even when $\reach(X)$ is arbitrarily smaller than $\eps$, a case for which, to the best of our knowledge, no quantitative convergence result between the curvatures of $X^{2\eps}$ and $X$ was known. We also conjecture that this claim holds when $X$ is subanalytic. Taken together, the two claims above provide a way to estimate the intrinsic volumes of an unknown shape $X$ with positive reach from a Hausdorff approximation $Y.$
From this point of view, the condition that $\reach(X)$ is positive is not restrictive since sets with positive reach form a dense family of compact subsets for the Hausdorff distance.

A byproduct of our methods is an answer to the second open question asked by Milnor in \cite{steiner_volumes}: In which sense do $X$ and $Y$ have to be close to guarantee that their intrinsic volumes are close? It turns out that the existence of a $C^0$-controlled homotopy equivalence (see \cite{controlled_homotopy} for a related notion) is sufficient, assuming a bound on the volume of the unit normal bundle of both sets. 
More precisely, say that $X$ and $Y$ are \emph{$(\eps, \delta)$-homotopy equivalent} if there exist two continuous maps $f : X \to Y$, $g : Y \to X$ such that ${\norme{f - \Id}_{\infty} \leq \eps}$, $\norme{g - \Id}_{\infty} \leq \eps$ and such that there exist homotopies $H_1$ (resp. $H_2$) between $f \circ g$ and $\Id_Y$ (resp.  $g \circ f$ and $\Id_X$) satisfying $\norme{H_1(t,\cdot) - \Id_Y}_{\infty} \leq 2\delta$ (resp. $\norme{H_2(t,\cdot) - \Id_X}_{\infty} \leq 2\delta$) for all $t \in [0,1]$.

\begin{theorem}[]
\label{thm:milnor_2}
Let $X$ and $Y$ be two compact subsets of $\R^d$ with positive reach. If $X$ and $Y$ are $(\eps, \delta)$-homotopy equivalent for $\eps$ and $\delta$ positive, we have:
\begin{equation}
    \module{V_i(X) - V_i(Y)} \leq C_d \max(\eps, \delta)\left ( K(X) + K(Y) \right ).
\end{equation}
\end{theorem}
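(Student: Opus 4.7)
My plan is to chain both parts of the Main Results through the offsets $X^{2\eps}$ and $Y^{2\eps}$ as intermediates. First, I observe that the $(\eps, \delta)$-homotopy equivalence immediately implies $d_H(X, Y) \leq \eps$: indeed $\norme{f - \Id}_\infty \leq \eps$ places every point of $X$ within $\eps$ of $Y$, and symmetrically for $g$. Hence the first Main Result is available at Hausdorff scale $\eps$.

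I would apply the first Main Result twice: first to the pair $(X, Y)$, giving $\module{V^{\eps}_i(Y) - V_i(X^{2\eps})} \leq C_d K(X^{2\eps}) \eps$, and then to the trivial pair $(Y, Y)$ (using $d_H(Y, Y) = 0 \leq \eps$), giving $\module{V^{\eps}_i(Y) - V_i(Y^{2\eps})} \leq C_d K(Y^{2\eps}) \eps$. A triangle inequality yields $\module{V_i(X^{2\eps}) - V_i(Y^{2\eps})} \leq C_d (K(X^{2\eps}) + K(Y^{2\eps})) \eps$. Combining with the second Main Result applied to both $X$ and $Y$ produces
\[
\module{V_i(X) - V_i(Y)} \leq C_d \eps \left( K(X) + K(Y) + 2 K(X^{2\eps}) + 2 K(Y^{2\eps}) \right).
\]
In the regime $\eps \leq \tfrac{1}{4}\min(\reach(X), \reach(Y))$, a tube-type estimate $K(Z^{2\eps}) \leq C_d K(Z)$ reduces this to $C_d \eps (K(X) + K(Y)) \leq C_d \max(\eps, \delta)(K(X) + K(Y))$. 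In the complementary regime, where $\max(\eps, \delta)$ is of order $\min(\reach(X), \reach(Y))$ or larger, I would invoke a universal bound of the form $\module{V_i(Z)} \leq C_d K(Z)/\reach(Z)^{d-i-1}$ for sets of positive reach to dominate $\module{V_i(X)} + \module{V_i(Y)}$ directly by $C_d \max(\eps, \delta)(K(X) + K(Y))$. The parameter $\delta$ intervenes precisely in this second regime: when the homotopies must swing wide to connect $X$ and $Y$ topologically, it is $\delta$ rather than $\eps$ that sets the effective scale at which the target bound must hold.

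The main technical obstacle is the tube comparison $K(X^{2\eps}) \leq C_d K(X)$ for sets of positive reach without any smoothness assumption, which requires controlling how the unit normal bundle of an offset relates to that of the original set via Federer's curvature theory rather than pointwise curvature bounds on a smooth boundary. A secondary delicate point is matching constants across the regime split so that the final $C_d$ depends only on the ambient dimension, and verifying that the first Main Result indeed applies with $\mu = 1$ in the small regime since $\reach_\mu(X) \geq \reach(X)$ for $\mu \leq 1$ on sets of positive reach.
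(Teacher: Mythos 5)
Your reduction never uses the homotopies $H_1,H_2$ at all --- only the bound $\norme{f-\Id}_\infty\leq\eps$, via $d_H(X,Y)\leq\eps$ --- and this is the symptom of the real gap. The Main Results are only available when $\eps\leq\frac14\reach_\mu(X)$, whereas \Cref{thm:milnor_2} assumes no relation whatsoever between $\eps,\delta$ and the reaches; your case split is therefore unavoidable, and it is the complementary regime that breaks. There you propose to dominate $\module{V_i(X)}+\module{V_i(Y)}$ by $C_d\max(\eps,\delta)(K(X)+K(Y))$ outright, but no such domination exists: take $X=Y$ to be two disjoint unit balls whose boundaries are at distance $2\rho$. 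Then $\reach(X)=\rho$ (a bottleneck, not a curvature, controls the reach), while $V_{d-1}(X)$ and $K(X)$ are constants independent of $\rho$. Choosing $\eps=\delta=\rho$ (legitimate, since $f=g=\Id$ with constant homotopies gives an $(\eps,\delta)$-homotopy equivalence for every positive $\eps,\delta$) puts you in your second regime, and the required inequality becomes $1\leq C_d\,\rho$, false as $\rho\to0$. The universal bound you invoke, $\module{V_i(Z)}\leq C_dK(Z)/\reach(Z)^{d-i-1}$, cannot rescue this: the reach sits in the denominator, so it deteriorates precisely when $\reach$ is small, which is the only case where the second regime is nonvacuous. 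In short, the theorem can only be proved by controlling the \emph{difference} $V_i(X)-V_i(Y)$, never the two terms separately. A secondary gap: your first regime needs $K(Z^{2\eps})\leq C_dK(Z)$ with a purely dimensional constant, which also fails (for $Z$ a point, $\reach(Z)=\infty$ so the first regime imposes no bound on $\eps$, yet $K(Z^{2\eps})$ grows without bound in $\eps$ while $K(Z)$ is fixed).

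The paper's proof is uniform in all regimes and is where $H_1,H_2$ and $\delta$ actually enter. For each center $x\in\R^d$, the maps $f,g$ together with the $C^0$-controlled homotopies induce a $\max(\eps,\delta)$-interleaving between the persistence modules of $d_x$ restricted to $X$ and to $Y$ (the condition $\norme{H_1(t,\cdot)-\Id_Y}_\infty\leq2\delta$ is exactly what makes $H_1$ map $Y\cap B(x,a)$ into $Y\cap B(x,a+2\delta)$, so that the interleaving diagram commutes up to homotopy). This gives $d_B(D^x_X,D^x_Y)\leq\max(\eps,\delta)$ for every $x$, with no reference to the reach, and the conclusion follows from the $\chi$-averaging lemma, the critical-point count of \Cref{lemma:number_crit} (where positivity of the reach is used, to control $N_0^R$ by $\Vol+M_R$), and the Legendre-coefficient extraction of \Cref{thm:persistent_rate}. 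Any repair of your argument in the large-$\eps$ regime would have to build such an interleaving from the homotopies, i.e., reproduce the paper's proof.
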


For our inference problem, a naive approach would be to estimate the intrinsic volumes of $X$ by the intrinsic volumes of small offsets of $Y.$ However, this leads to a trade-off between the bias induced by too large offset parameters and the spurious geometric details that come with small offset parameters. Optimizing this trade-off yields a sublinear rate of convergence. We use the noise-filtering properties of persistent homology to improve over this sublinear behavior by studying the inclusion $Y^{\eps} \subset Y^{3\eps}$ instead, similar to the usual method for estimating Betti numbers \cite{ChazalWeakFeatureSize, PersistenceStability}. Our approach uses the principal kinematic formula from integral geometry to express the intrinsic volumes as integrals of certain Euler characteristics. We then define our persistent intrinsic volumes by replacing these Euler characteristics with persistent Euler characteristics associated with the pair $Y^{\eps} \subset Y^{3\eps}$.  A stability theorem for image persistence then allows us to prove a linear rate of convergence for our estimators.

\section*{Outline}
\begin{itemize}
    \item In \Cref{sec:distance}, we describe some basic properties of distance functions and we introduce our regularity conditions.
    \item In \Cref{sec:intrinsic_volumes}, we define the intrinsic volumes, and the class of sets \textit{admitting a normal bundle}. Geometric measure theory provides the \textit{principal kinematic formula} which serves as the link between the topological and geometric properties of a compact set admitting a normal bundle.
    \item In \Cref{sec:persistance} and \Cref{sec:image_persistence}, we give some background on persistent homology and prove a stability theorem for image persistence modules associated with sublevel set filtrations of non necessarily smooth functions. We also prove a bound on the number of bars in an image persistence diagram.
    \item In \Cref{sec:persistent_volumes}, we state and prove our main results.
    \item In \Cref{sec:MonteCarlo}, we discuss the computational tractability of persistent intrinsic volumes and we give a Monte-Carlo based algorithm to approximate them.
\end{itemize}

\section{Distance functions and $\mu$-reach of compact sets}
\label{sec:distance}
In this section, we give definitions pertaining to non-smooth analysis and distance functions \cite{Clarke1975GeneralizedGA}. 

\begin{definition}
Let $X \subset \R^d$ be a closed set. 
\begin{itemize}
    \item The distance to $X$ is the map $d_X : x \in \R^d  \mapsto  \inf_{z \in X} \norme{z-x}$. When $X= \{x\}$ is a singleton, we write $d_X = d_x$ to ease notations. We let $\Delta(X) \coloneqq d_X(0)$.

    \item The set of \emph{closest points to $x$ in $X$} is denoted by $\Gamma_X(x)$.
    \item The \emph{Clarke gradient} of $d_X$ at any point $x \notin X$ is the convex hull of directions opposite to the closest points to $x$, see \Cref{fig:clarke}. 
    \begin{equation*}
\clarke d_X(x) \coloneqq \Conv \left ( \left \{ \frac{x-z}{\norme{x-z}} \, \Bigr | \, z \in \Gamma_X(x)  \right \} \right ).    
\end{equation*}
\end{itemize} 
\end{definition}

\begin{figure}[h!]
    \centering
    \includegraphics[scale = 0.35]{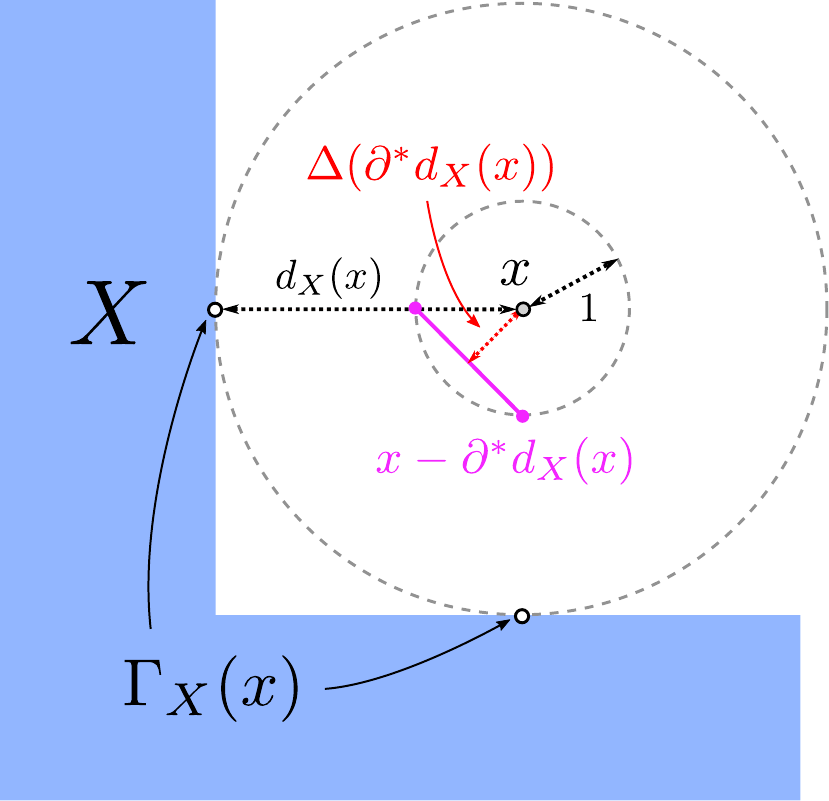}
    \caption{Clarke gradient of $d_X$ outside of $X$.}
    \label{fig:clarke}
\end{figure}

The quantity $\Delta \! \left (\clarke d_X(x) \right )$ coincides with the norm of the generalized gradient $\norme{\nabla d_X(x)}$ defined by Lieutier in \cite{Lieutier}, which is classically used in the definition of the $\mu$-reach of a compact set. 

\begin{definition}[Reach and $\mu$-reach of a subset of $\R^d$]
\label{def:mu_reach}
Let $X \subset \R^d$ and $\mu \in (0,1]$. We define the $\reach$ and the $\mu$-$\reach$ of $X$ by:

\begin{itemize}
\item $\reach(X) \coloneqq \sup \left \{t > 0 \, | \, 0 < d_X(x) < t \implies \mathrm{card} \; \Gamma_X(x) = 1 \right \}$,
\item $\reach_{\mu}(X) \coloneqq \sup \left \{t > 0 \, | \, 0 < d_X(x) < t \implies \Delta (\clarke d_X(x)) \geq \mu \right \}$.
\end{itemize}
The function $\mu \mapsto \reach_{\mu}(X)$ is non-increasing, and $\reach_{1}(X) = \reach(X)$.
\end{definition}

We need the following theorem \cite{MorseTubular} for the specific setting where $\phi$ equals the distance function to $X$. With this choice the theorem yields that for any compact set $X$ with $\reach_{\mu}(X) > 0$, a sufficiently small offset of $X$ deformation retracts to $X$ itself.

\begin{theorem}[Approximate inverse flow of Lipschitz functions]
\label{thm:flow}
   Let $\phi: \R^d \to \R$ be a Lipschitz real-valued function and let $a < b \in \R$. Assume that: \[ \inf  \{ \Delta( \clarke \phi(x)) \tq x \in \phi^{-1}(a,b] \} = \mu > 0. \] Then for every $\sigma > 0$, there exists a continuous function
 \[C^{\sigma}_{\phi}: \left \{
 \begin{array}{ccc}
      [0,1] \times \phi^{-1}( \infty, b]& \to & \phi^{-1}(-\infty, b]  \\
      (t,x) & \mapsto & C^{\sigma}_{\phi}(t,x) 
 \end{array} \right.
 \]
 such that:
 \begin{itemize}
     \item For any $s > t$ and $x$ such that $C^{\sigma}_{\phi}(s,x) \in \phi^{-1}(a,b]$, we have:
 \begin{equation*}
\phi(C^{\sigma}_{\phi}(s,x)) - \phi(C^{\sigma}_{\phi}(t,x)) \leq -(s-t)(b-a);
 \end{equation*} 
 \item  For any $t \in [0,1]$, $x \in \phi^{-1}(\infty, a]$ implies $C^{\sigma}_{\phi}(t,x) = x$;
 \item For any $x \in \phi^{-1}( -\infty,b]$, the map $s \mapsto C^{\sigma}_{\phi}(s,x)$ is $\frac{b-a}{\mu - \sigma}$-Lipschitz.
 \end{itemize}
 
 In particular, $C^{\sigma}_{\phi}(1, \cdot)$ is a deformation retraction between $\phi^{-1}(-\infty, a]$ and $\phi^{-1}(-\infty,b]$.
\end{theorem}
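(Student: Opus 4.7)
My plan is to construct the flow $C^\sigma_\phi$ by integrating a continuous ``approximate steepest descent'' vector field $V_\sigma$ defined on $\phi^{-1}(a,b]$ and extended by zero on $\phi^{-1}(-\infty,a]$, following Lieutier's strategy \cite{Lieutier} for distance functions adapted to a general Lipschitz $\phi$. At each $x \in \phi^{-1}(a,b]$, let $w_0(x)$ be the point of minimum norm in the compact convex set $\clarke\phi(x)$; by hypothesis $\norme{w_0(x)} = \Delta(\clarke\phi(x)) \geq \mu$. The nearest-point characterization $\scal{w}{w_0(x)} \geq \norme{w_0(x)}^2$ for every $w \in \clarke\phi(x)$ shows that the direction $v(x) \coloneqq -(b-a)\, w_0(x)/\norme{w_0(x)}^2$ satisfies $\max_{w \in \clarke\phi(x)} \scal{w}{v(x)} \leq -(b-a)$ while obeying $\norme{v(x)} \leq (b-a)/\mu$.

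Since $x \mapsto w_0(x)$ is typically not continuous, the second step produces a continuous field $V_\sigma$ retaining these two properties with $\mu$ replaced by $\mu-\sigma$. The set-valued map $x \mapsto \clarke\phi(x)$ is upper semicontinuous, so for each $x_0$ there is an open neighborhood $U_{x_0}$ on which the fixed vector $v(x_0)$ still satisfies $\max_{w \in \clarke\phi(x)}\scal{w}{v(x_0)} \leq -(b-a)$ and $\norme{v(x_0)} \leq (b-a)/(\mu-\sigma)$. Using paracompactness of $\phi^{-1}(a,b]$, I would extract a locally finite refinement $\{U_i\}$ and define $V_\sigma \coloneqq \sum_i \psi_i v_{x_i}$ for a subordinate partition of unity $\{\psi_i\}$. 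Convexity of the set of admissible descent directions at each $x$ ensures that the averaged field $V_\sigma$ inherits both the norm bound and the descent inequality. A cutoff that vanishes as $\phi$ approaches $a$ glues $V_\sigma$ to the zero field across $\{\phi=a\}$ while preserving continuity and the two bounds on $\{\phi > a\}$.

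The third step is to define $C^\sigma_\phi(t,x)$ as the value at time $t$ of an integral curve of $V_\sigma$ starting at $x$, frozen once $\phi$ reaches $a$. Because $V_\sigma$ is only continuous, Peano's theorem yields existence but not uniqueness; to obtain a continuous flow one runs an Euler scheme: for $\tau_n \to 0$ define $C^{\sigma,n}_\phi$ by iterating $x \mapsto x + \tau_n V_\sigma(x)$ linearly interpolated in time. The uniform bound $\norme{V_\sigma}_\infty \leq (b-a)/(\mu-\sigma)$ gives equicontinuity in $t$, and Arzelà--Ascoli provides a continuous limit whose $t$-Lipschitz constant is exactly the one stated. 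The descent estimate is obtained by passing to the limit the discrete inequality $\phi(C^{\sigma,n}_\phi(t+\tau_n,x)) - \phi(C^{\sigma,n}_\phi(t,x)) \leq -\tau_n (b-a) + o(\tau_n)$, which is itself a consequence of the Clarke chain rule $\phi(y+\tau V_\sigma(y)) - \phi(y) \leq \tau \max_{w \in \clarke\phi(y)} \scal{w}{V_\sigma(y)} + o(\tau)$ applied along each segment.

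The main obstacle is securing joint continuity in $(t,x)$ of the Euler limit despite the failure of ODE uniqueness: a priori, different subsequential limits could depend discontinuously on the starting point. The fix is a diagonal extraction argument combined with a monotonicity observation, namely that the $\phi$-values along neighboring trajectories are essentially locked together by the enforced descent rate, so two nearby trajectories cannot separate abruptly without violating the Lipschitz bound on $V_\sigma$. A secondary technicality is handling the boundary $\{\phi = a\}$, where $V_\sigma$ must switch off continuously without degrading the descent inequality on the open region $\{\phi > a\}$; the cutoff introduced in the second step accomplishes this cleanly, which is why only the stated inequality on $s > t$ with $C^\sigma_\phi(s,x) \in \phi^{-1}(a,b]$ is needed.
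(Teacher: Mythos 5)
The paper does not prove this theorem: it is imported verbatim from the cited reference on Morse theory for tubular neighborhoods, so there is no internal proof to compare against. Your overall architecture (a Lieutier-style continuous descent field built from the minimal-norm selections of $\clarke\phi$ via upper semicontinuity and a partition of unity, then integrated in time) is the right one and matches the standard construction. Two steps, however, do not work as written.

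First, the multiplicative cutoff near $\{\phi=a\}$ is incompatible with the descent inequality you need, and in fact no continuous field vanishing on $\{\phi\le a\}$ can do the job: if $\phi$ is $L$-Lipschitz then every $w\in\clarke\phi(x)$ has $\norme{w}\le L$, so $\max_{w\in\clarke\phi(x)}\scal{w}{V(x)}\le -(b-a)$ forces $\norme{V(x)}\ge (b-a)/L$, which contradicts $V\to 0$ as $x$ approaches $\{\phi=a\}$ from above. With your cutoff, a trajectory whose endpoint $C^{\sigma}_{\phi}(s,x)$ lies just above level $a$ has spent time in the strip where the field is attenuated, and the claimed rate $-(s-t)(b-a)$ fails there; the hypothesis $C^{\sigma}_{\phi}(s,x)\in\phi^{-1}(a,b]$ does not exclude that strip. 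The correct device is a stopping time, not a cutoff: integrate the unattenuated field and freeze the trajectory at its first hitting time of $\{\phi\le a\}$. Joint continuity across the interface then follows because a starting point with $\phi(x)$ slightly above $a$ reaches level $a$ in time at most $(\phi(x)-a)/(b-a)$, hence moves a distance at most $(\phi(x)-a)/(\mu-\sigma)$, which tends to $0$. Second, your Euler/Arzel\`a--Ascoli passage does not deliver joint continuity in $(t,x)$: the velocity bound gives equicontinuity in $t$ only, and the "monotonicity observation" you invoke does not prevent nearby trajectories of a merely continuous field from separating. This gap is also unnecessary: if you choose the partition of unity $\{\psi_i\}$ smooth (or Lipschitz) and locally finite, the field $V_\sigma=\sum_i\psi_i v_{x_i}$ is already locally Lipschitz, so Cauchy--Lipschitz gives a unique flow that is jointly continuous, and the descent estimate follows by integrating the a.e.\ bound on $\frac{d}{dt}\phi(\gamma(t))$ by the Clarke directional derivative, with no limit of discrete schemes required.
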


\section{Intrinsic Volumes and normal bundles}
\label{sec:intrinsic_volumes}

\subsection*{Background on Intrinsic Volumes}
 Intrinsic volumes of subsets of $\R^d$ are geometric quantities of importance in geometric measure theory. They were first defined by Steiner \cite{steiner} for compact convex sets and by Weyl \cite{weyl} for smooth submanifolds. Federer \cite{FedererCurvature} extended their definition to the class of sets with \emph{positive reach}. Specifically, he proved the \emph{tube formula} which states that the volume of small offsets of a set with positive reach is a polynomial whose coefficients provide a definition for the intrinsic volumes, generalizing the definitions of Steiner and Weyl: 
\[ \forall \; 0 \leq r \leq \reach(X), \; \mathcal{H}^{d}(X^r) \eqqcolon \sum_{i=0}^{d} \omega_i V_{d-i}(X) r^i, \]
where for any $s \geq 0$, $\mathcal{H}^s$ denotes  the $s$-Hausdorff measure of $\R^d$.
The polynomial in the right handside is called \emph{the Steiner polynomial of X} and we denote it by $Q_X$.

\begin{proposition}[Properties of intrinsic volumes]
Let $X,Y \subset \R^d$ be two sets of positive reach. The intrinsic volumes $V_i$ satisfy the following properties for all $0 \leq i \leq d$:
\begin{description} 
    \item[(Isometry Invariance)] For any isometry $g : \R^d \to \R^d$, $X$ and $g(X)$ have the same intrinsic volumes.
    \item[(Hausdorff Continuity)] The restriction of $V_i$ to the class of convex subsets of $\R^d$ is continuous in the Hausdorff metric;
    \item[(Additivity)] If both $X \cup Y$ and $X \cap Y$ have positive reach, we have:
    \[ V_i(X \cup Y) + V_i(X \cap Y) = V_i(X) + V_i(Y).\]
\end{description}
\end{proposition}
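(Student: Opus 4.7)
The plan is to read off all three properties from the tube formula $\mathcal{H}^d(X^r) = \sum_{i=0}^d \omega_i V_{d-i}(X) r^i$, valid on $[0, \reach(X)]$, together with standard approximation arguments; the last property is the only one that genuinely requires extra machinery.

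For isometry invariance, the essential observation is that offsets commute with isometries: if $g : \R^d \to \R^d$ is an isometry, then $g(X^r) = g(X)^r$ since $g$ preserves distances, and $\reach(g(X)) = \reach(X)$. Since $\mathcal{H}^d$ is invariant under isometries, the Steiner polynomials $Q_X$ and $Q_{g(X)}$ take identical values on $[0, \reach(X)]$, so they coincide as polynomials in $r$ and their coefficients $V_i$ agree.

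For Hausdorff continuity on convex bodies, I would use that convex sets have infinite reach, so the tube formula holds for every $r \geq 0$. If $K_n \to K$ in the Hausdorff metric, then $K_n^r \to K^r$ in the Hausdorff metric for every fixed $r$, because Minkowski sum with $rB$ is $1$-Lipschitz in $d_H$. Lebesgue measure is continuous on convex bodies under Hausdorff convergence (sandwich $K_n^r$ between a small interior dilate and a slight inflation of $K^r$ and apply dominated convergence to the indicator functions). Thus $Q_{K_n}(r) \to Q_K(r)$ pointwise in $r$, and evaluating at $d+1$ distinct values and inverting the resulting Vandermonde system recovers each coefficient $V_{d-i}(K_n) \to V_{d-i}(K)$.

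The additivity is the main obstacle. A direct attempt from the tube formula fails because $(X \cap Y)^r$ is in general strictly contained in $X^r \cap Y^r$, so the pointwise identity $\mathbb{1}_{(X\cup Y)^r} + \mathbb{1}_{(X\cap Y)^r} = \mathbb{1}_{X^r} + \mathbb{1}_{Y^r}$ does not hold and cannot be integrated. The remedy, due to Federer, is to promote the tube formula to a \emph{local} statement: for a set of positive reach one defines the curvature measures $\Phi_i(X, \cdot)$ as Borel measures supported on $X$ that are built from the nearest-point projection on a tubular neighborhood, and that satisfy $V_i(X) = \Phi_i(X, \R^d)$. The inclusion-exclusion must then be verified at the measure level by comparing the nearest-point projections of $X$, $Y$, $X\cap Y$, $X \cup Y$ on the region where all four have positive reach. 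Rather than reproducing this calculation, I would invoke Federer's additivity theorem for curvature measures from \cite{FedererCurvature} and obtain the stated identity by evaluating on all of $\R^d$.
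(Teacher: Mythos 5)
Your proposal is correct. Note that the paper itself offers no proof of this proposition: it is stated as classical background, with the relevant sources being Federer \cite{FedererCurvature} for sets of positive reach and the convex-geometry literature (\cite{hadwiger,klain1997introduction}) for the Hadwiger-type properties, so there is no argument in the paper to compare yours against. Your derivations of isometry invariance and Hausdorff continuity directly from the tube formula are sound (convex sets have infinite reach, so the Steiner polynomial identity holds for all $r \geq 0$, and convergence of a degree-$d$ polynomial at $d+1$ points gives convergence of its coefficients), and you correctly identify additivity as the one property that cannot be read off the global tube formula --- the failure of $(X \cap Y)^r = X^r \cap Y^r$ is exactly the obstruction --- so deferring to Federer's additivity theorem for curvature measures is the standard and appropriate resolution.
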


The celebrated Hadwiger theorem \cite{hadwiger,klain1997introduction} states that intrinsic volumes form a basis of the functionals satisfying these three properties over the class of compact convex subsets of $\R^d$. The additivity property allows in particular to extend the definition of intrinsic volumes to e.g finite unions of convex sets, using the inclusion-exclusion principle.\\

The principal kinematic formula provides an extension of the tube formula when the offset parameter goes beyond the reach:

\begin{theorem}[Particular case of the Principal Kinematic Formula]
Let $X \subset \R^d$ be a compact set with positive reach. Then for any $r > 0$, 
\begin{equation*}
\int_{\mathbb{R}^d} \chi(X \cap B(x,r)) \mathrm{d}x = Q_X(r). 
\end{equation*}
\end{theorem}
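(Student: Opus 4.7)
The plan is to compute $\chi(X\cap B(x,r))$ via Morse theory applied to the distance function $f_x(y)=\|y-x\|$ restricted to $X$, then convert the integral over $x\in\R^d$ into an integral over the normal bundle $\Nor(X)$ via the normal exponential map $e(p,u,t)=p+tu$. The curvature-integral formula for intrinsic volumes recalled in the introduction will then directly produce the Steiner polynomial $Q_X(r)$.

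More concretely, for almost every $x\in\R^d$, the restriction of $f_x$ to $X$ is a Morse function whose critical points at value $t>0$ are exactly the $p\in X$ such that $(p,u)\in\Nor(X)$ with $x=p+tu$. At such a critical point the Hessian on $T_pX$ equals $I-tW_{p,u}$, where $W_{p,u}$ is the Weingarten shape operator, so the Morse index is $\nu(p,u,t)=\#\{i\tq\kappa_i(p,u)>1/t\}$. The Morse equality $\chi=\sum(-1)^{\mathrm{index}}$, combined with the fact that the signed Jacobian of $e$ at $(p,u,t)$ equals $\det(I-tW_{p,u})=\prod_i(1-t\kappa_i)$ whose sign is exactly $(-1)^{\nu(p,u,t)}$, makes the Morse sign cancel the sign of the Jacobian in the change of variables formula, giving
\[
\int_{\R^d}\chi(X\cap B(x,r))\,dx = \mathcal{H}^d(X) + \int_{\Nor(X)}\int_0^r \prod_{i=1}^{d-1}\bigl(1-t\kappa_i(p,u)\bigr)\,dt\,d\mathcal{H}^{d-1}(p,u).
\]
Expanding the product as $\sum_k(-t)^k\sigma_k(\kappa_1,\dots,\kappa_{d-1})$ and integrating in $t$ produces, term by term via the curvature integrals from the introduction, precisely the monomials of $Q_X(r)$.

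The main obstacle is that a compact set of positive reach need not be a smooth submanifold, so the Morse-theoretic step has to be carried out via Federer's generalized normal bundle $\Nor(X)$ and the shape operators defined $\mathcal{H}^{d-1}$-almost everywhere on it. Moreover, when $r>\reach(X)$ the map $e$ ceases to be injective, but this does not affect the argument: each critical point of $f_x$ is still picked up in the $(p,u,t)$ parametrization with the correct sign, and the Jacobian absorbs the multiplicity. Making these two points rigorous rests on Federer's curvature measure theory; once granted, the algebraic identification with $Q_X(r)$ is a direct computation.
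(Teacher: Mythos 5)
The paper does not prove this statement: it is quoted as a known special case of Federer's principal kinematic formula for sets of positive reach \cite{FedererCurvature} (with \cite{ZahleRataj} invoked later for the extension to sets admitting a normal bundle). Your Morse-theoretic route is a legitimate alternative proof of it, and it is in fact the same machinery the paper deploys in \Cref{lemma:number_crit}, where the \emph{unsigned} count of critical points of $d_x$ is pushed through the area formula for $(p,u,t)\mapsto p+tu$; you are carrying out the signed version. So the strategy is sound, but the two steps you black-box carry essentially all of the weight, and one of them is misattributed. The Morse equality $\chi(X\cap B(x,r)) = \one_X(x) + \sum (-1)^{\mathrm{index}}$ for a compact set of positive reach is \emph{not} part of "Federer's curvature measure theory": Federer's normal bundle gives you the critical points and the Jacobian, but the statement that the Euler characteristic of the sublevel sets jumps by $(-1)^{\nu}$ at a nondegenerate critical value is Fu's generalized Morse theory (the role played by \cite{MorseTubular} in \Cref{lemma:number_crit}). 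That theorem, together with the a.e.\ nondegeneracy argument the paper spells out in \Cref{lemma:number_crit}, is the actual proof.

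Two further points need care. Since $X$ need not be a manifold, $T_pX$ and $W_{p,u}$ do not exist pointwise (at a polytope vertex the normal bundle over a single $p$ is a whole spherical cap), so the change of variables must be performed on $\Nor(X)\times[0,r]$, where \Cref{prop:structure} provides an a.e.\ tangent plane; the Jacobian there is $\prod_i \bigl(1+t\kappa_i(p,u)\bigr)\bigl(1+\kappa_i(p,u)^2\bigr)^{-1/2}$ in the paper's sign convention — your $1-t\kappa_i$ uses the opposite sign for $W$ (clashing with the introduction, where convex sets have positive curvature), and your displayed integrand omits the normalization $\prod_i(1+\kappa_i^2)^{-1/2}$. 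Relatedly, the final identification with $Q_X(r)$ cannot go through the smooth-boundary curvature integrals of the introduction, which are only stated for hypersurfaces. The clean closing move is: your right-hand side is visibly a polynomial in $r$ for all $r>0$; for $r\le\reach(X)$ the normal exponential map is a bijection onto $X^r\setminus X$ with nonnegative Jacobian, so the area formula and the tube formula identify its coefficients with $\omega_i V_{d-i}(X)$; the polynomial identity then extends the equality to all $r$. With these repairs the argument is complete.
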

 
 Intrinsic volumes have been extended to other classes of sets while preserving the properties and results listed above, notably the class subanalytic sets \cite{FuSubAnalytic}, the $U_{PR}$ class i.e generic unions of sets with positive reach \cite{ZahleUnionReach} and the WDC class \cite{wdc}. We will use another extension of intrinsic volumes, namely to the class of sets admitting a normal bundle, which we define in the following section.

\subsection*{Sets admitting a normal bundle}
We introduce a class of sets defined so as to contain small tubular neighborhoods of sets with positive $\mu$-reach for any $\mu \in (0,1]$. 
We recall \cite{GMT} that the \emph{tangent cone} $\Tan(X,x)$  of a set $X \subset \R^d$ at a point $x \in X$ is defined as the cone spanned by the set of limits of sequences of the form $(x_n-x)/\norme{x_n -x}$ where $x_n$ is a sequence in $X$ converging to $x$ and not equal to $x$ for all $n \in \N$.  When $X$ has positive reach, the \emph{normal cone} of $X$ at $x$ is 
$ \Nor(X,x) \coloneqq \Tan(X,x)^{\text{o}}$
    where for any set $S \subset \R^d$, its dual cone is 
    $S^{\text{o}} \coloneqq \{ u \in \R^d \tq  \scal{u}{v} \leq 0 , \,  \forall \, v \in S \}.$
    
    \begin{definition}[Sets admitting a normal bundle] Let $X \subset \R^d$.
When $\complementaire{X} \coloneqq \overline{\R^d \setminus X}$ has positive reach and is a Lipschitz domain, define
    \[ \Nor(X,x) \coloneqq - \Nor(\complementaire{X},x). \]
    This definition is consistent in case both $X, \complementaire{X}$ have positive reach. If either $\reach(X) > 0$ or both $\reach(\complementaire{X}) > 0$ and $X$ is a Lipschitz domain, we say that \emph{$X$ admits a normal bundle $\Nor(X)$} with
    \[\Nor(X) \coloneqq \bigcup_{x \in \partial X} \{ x \} \times (\Nor(X,x) \cap \Sphere^{d-1}). \]
\end{definition}

The pair $(x,n) \in \Nor(X)$ is said to be \emph{regular} when $\Tan(\Nor(X), (x,n))$ is a {$(d-1)$-dimensional} vector space. 
The structure of the tangent spaces of $\Nor(X)$ at regular pairs defines pointwise principal curvatures. They are essentially a generalization of the classical principal curvatures from submanifolds of $\R^d$ to sets admitting a normal bundle, allowing to consider integrals of curvatures (see \Cref{def:Rmass}) for non-smooth sets.

\begin{proposition}[Tangent spaces of normal bundles \cite{RatajLegendrian}]
\label{prop:structure}
Let $X$ be a compact subset of $\R^d$ admitting a normal bundle $\Nor(X)$. Then pairs in $\Nor(X)$ are $\mathcal{H}^{d-1}$-almost all regular, and for any regular pair $(x,n) \in \Nor(X)$, there exist:
\begin{itemize}
    \item A family
$\kappa_1, \dots , \kappa_{d-1}$ in $\R \cup \{ \infty \}$ called \emph{principal curvatures of $X$ at $(x,n)$};
    \item A family $b_1, \dots, b_{d-1} \in \R^d$ of vectors orthogonal to $n$ called \emph{principal directions} at $(x,n)$,
such that the family $\left ( \frac{1}{\sqrt{1 + \kappa_i}^2}b_i , \frac{\kappa_i}{\sqrt{1+\kappa_i^2}}b_i \right )_{1 \leq i \leq d-1}$ forms an orthonormal basis of $\Tan(\Nor(X), (x,n))$.
    \end{itemize}
Principal curvatures $\kappa_i = \kappa_i(x,n)$ are unique up to permutations.
When $\reach(X) > 0$, every principal curvature $\kappa_i$ is larger than $-\reach(X)^{-1}$.

    Moreover, assume that $\partial X$ is a $C^{1,1}$ hypersurface, i.e there exists a locally Lipschitz Gauss map $x \mapsto n(x)$ of outward pointing normals. Then $n$ is differentiable $\mathcal{H}^{d-1}$-almost everywhere and at every point $x$ where it is differentiable the multiset of principal curvatures at $(x,n(x))$ coincide with the multiset of eigenvalues of the differential of $n$, i.e the classical principal curvatures.
\end{proposition}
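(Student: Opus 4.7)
The plan is to rely on the structure of $\Nor(X)$ as a $(d-1)$-rectifiable Legendrian subset of $\R^d \times \Sphere^{d-1}$, and to diagonalize an appropriate symmetric form at regular pairs. I would proceed in four stages: rectifiability, the Legendrian condition, diagonalization, and the two addenda.

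First I would check that $\Nor(X)$ is $(d-1)$-rectifiable. When $\reach(X) > 0$, this is immediate because for any $t \in (0, \reach(X))$ the map $\Phi_t \colon (x,n) \mapsto x + tn$ is a bi-Lipschitz embedding of $\Nor(X)$ onto a subset of the $C^{1,1}$-hypersurface $\partial X^t$; when instead $\complementaire{X}$ has positive reach and $X$ is a Lipschitz domain, the same argument applied to $\complementaire{X}$ works after flipping normals. Rectifiability yields an approximate tangent $(d-1)$-plane at $\mathcal{H}^{d-1}$-almost every pair, which is the first assertion. I would then verify the Legendrian condition: the canonical $1$-form $\alpha = \sum_i n_i \diff x_i$ vanishes on $\Tan(\Nor(X),(x,n))$ at every regular pair. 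This can be obtained locally via $\Phi_t$, since $\Phi_t$ sends $\Nor(X)$ into the level set $\{d_X = t\}$ whose outward normal is proportional to $n$.

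At a regular pair $(x,n)$, write the tangent space as a $(d-1)$-dimensional subspace $\mathcal{T}$ of $\R^d \times n^{\perp}$ (identifying the tangent space to $\Sphere^{d-1}$ at $n$ with $n^{\perp}$). The spherical component automatically satisfies $v \perp n$, and the Legendrian condition gives $\scal{u}{n} = 0$ as well, so $\mathcal{T} \subset n^{\perp} \oplus n^{\perp}$. On this space, $\omega((u_1,v_1),(u_2,v_2)) = \scal{u_1}{v_2} - \scal{u_2}{v_1}$ is a symplectic form for which $\mathcal{T}$ is Lagrangian (the Rataj--Z\"ahle symmetry of the second fundamental form). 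Simultaneously diagonalizing the Euclidean inner product and the symmetric pairing $\scal{u_1}{v_2} + \scal{u_2}{v_1}$ on $\mathcal{T}$ then produces an orthonormal basis of the form $(\cos\theta_i \, b_i, \sin\theta_i \, b_i)$ with $b_i \in n^{\perp}$ orthonormal; setting $\kappa_i = \tan \theta_i \in \R \cup \{\infty\}$ matches the statement, and uniqueness up to permutation follows from uniqueness of eigendecompositions.

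For the reach bound, injectivity of $\Phi_t$ on $\Nor(X)$ for $t \in (0, \reach(X))$ forces its differential to be non-singular on any regular tangent: the basis vector $(b_i, \kappa_i b_i)/\sqrt{1+\kappa_i^2}$ is sent to $(1+t\kappa_i)\, b_i/\sqrt{1+\kappa_i^2}$, so $1+t\kappa_i$ cannot vanish for $t < \reach(X)$, yielding $\kappa_i \geq -\reach(X)^{-1}$. In the $C^{1,1}$ case, Rademacher's theorem applied to the Lipschitz Gauss map $x \mapsto n(x)$ gives differentiability $\mathcal{H}^{d-1}$-a.e.; near such a point, $\Nor(X)$ is the graph of $n$, so its tangent is the graph of $\diff n(x)$. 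Differentiating $\norme{n(x)}^2 = 1$ yields $\diff n(x)^{\top} n(x) = 0$, and the Legendrian condition makes $\diff n(x)$ symmetric on $n(x)^{\perp}$; its eigendecomposition then coincides with the frame $(b_i,\kappa_i)$ above. The main obstacle is the diagonalization step, which must accommodate infinite $\kappa_i$ and non-smooth $\partial X$: the Lagrangian viewpoint replaces the classical Weingarten graph by an arbitrary Lagrangian plane in $n^{\perp} \oplus n^{\perp}$, handling these degenerate directions uniformly.
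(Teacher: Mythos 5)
The paper does not prove this proposition: it is imported wholesale from Rataj--Z\"ahle \cite{RatajLegendrian}, so there is no in-text argument to compare against. Your reconstruction follows the standard route of that literature -- rectifiability of $\Nor(X)$ via the bi-Lipschitz map $\Phi_t : (x,n) \mapsto x+tn$ onto the $C^{1,1}$ level set $\{d_X = t\}$, the Legendrian/Lagrangian property of the approximate tangent planes, and a linear-algebra normal form for Lagrangian subspaces of $n^{\perp} \oplus n^{\perp}$ -- and the reach bound and the $C^{1,1}$ addendum are handled correctly (bi-Lipschitzness, not mere injectivity, of $\Phi_t$ is what forces $1+t\kappa_i \neq 0$; you should say so explicitly).

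The one step that does not close as written is the diagonalization. Simultaneously diagonalizing the Euclidean inner product and the symmetric pairing $\beta\bigl((u_1,v_1),(u_2,v_2)\bigr) = \scal{u_1}{v_2} + \scal{u_2}{v_1}$ on a Lagrangian plane $\mathcal{T}$ does not force the basis vectors to have parallel components $u_i \parallel v_i$. Concretely, in $\R^2 \oplus \R^2$ the plane $\mathcal{T} = \{((a,0),(0,b))\}$ is Lagrangian and $\beta$ vanishes identically on it, so \emph{every} orthonormal basis of $\mathcal{T}$ diagonalizes both forms, yet most of them are not of the form $(\cos\theta_i\, b_i, \sin\theta_i\, b_i)$. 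The normal form does exist for every Lagrangian subspace, but you need a finer argument: e.g.\ take the singular value decomposition of the orthogonal projection $\mathcal{T} \to n^{\perp} \times \{0\}$ (equivalently, identify $n^{\perp}\oplus n^{\perp}$ with $n^{\perp}\otimes\C$, write $\mathcal{T} = U\cdot n^{\perp}$ for a unitary $U$, and diagonalize the symmetric unitary $UU^{T}$ by a real orthogonal matrix), or additionally diagonalize the quadratic form $(u,v)\mapsto \norme{u}^2$ within each eigenspace of $\beta$. With that lemma supplied, the rest of your argument, including uniqueness of the $\kappa_i$ up to permutation, goes through.
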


We are now able to define the $R$-curvature mass of a set admitting a normal bundle. This quantity will appear in the explicit convergence bounds of our main results.
\begin{definition}[$R$-curvature mass of a compact set admitting a normal bundle]
\label{def:Rmass}
Let $X \subset \R^d$ be a compact subset of $\R^d$ admitting a normal bundle and let $R >0$. The $R$-curvature mass of $X$ is defined by: 
\begin{equation*}
M_R(X) \coloneqq \int_{0}^R \int_{\Nor(X)} \prod_{i=1}^{d-1} \frac{\module{1 + t \kappa_i(x,n)}}{\sqrt{1 + \kappa_i(x,n)^2}} \diff \mathcal{H}^{d-1}(x,n) \diff t.     
\end{equation*}

\end{definition}

\begin{proposition}[R-Curvature mass of a set with smooth boundary]
When $X$ has a $C^{1,1}$ $(d-1)$-hypersurface as boundary, this quantity can be expressed as an integral over $\partial X$ with its volume form $d \omega_{\partial X}$ inherited from the canonical volume form of $\R^d$:
\begin{equation*}
  M_R(X) = \int_{0}^R \int_{\partial X} \prod_{i=1}^{d-1} \module{1 + t \kappa_i}  d \omega_{\partial X}\diff t.   
\end{equation*}
\end{proposition}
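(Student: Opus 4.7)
The plan is to parametrize the normal bundle $\Nor(X)$ by $\partial X$ and apply the area formula. Since $\partial X$ is a $C^{1,1}$ hypersurface, the outward unit Gauss map $n : \partial X \to \Sphere^{d-1}$ is well-defined and locally Lipschitz, and one checks from the definition $\Nor(X,x) = - \Nor(\complementaire{X},x)$ that $\Nor(X,x) \cap \Sphere^{d-1} = \{n(x)\}$. Hence $\Nor(X)$ is exactly the graph of $n$, and the map $\phi : x \in \partial X \mapsto (x,n(x)) \in \Nor(X)$ is a Lipschitz bijection.

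By Rademacher's theorem, $n$ is differentiable $\mathcal{H}^{d-1}$-almost everywhere on $\partial X$. At such a differentiability point $x$, the final assertion of \Cref{prop:structure} identifies the principal directions $b_1,\dots,b_{d-1}$ of $\Nor(X)$ at $(x,n(x))$ with eigenvectors of $dn$ of eigenvalues $\kappa_i$. The $b_i$ thus form an orthonormal basis of $T_x \partial X$, and
\[ d\phi(b_i) = (b_i, \kappa_i b_i) = \sqrt{1+\kappa_i^2} \cdot \tfrac{1}{\sqrt{1+\kappa_i^2}}(b_i, \kappa_i b_i). \]
Since the vectors $\tfrac{1}{\sqrt{1+\kappa_i^2}}(b_i, \kappa_i b_i)$ form, again by \Cref{prop:structure}, an orthonormal basis of $T_{(x,n(x))} \Nor(X)$, the Jacobian of $\phi$ in these orthonormal bases is
\[ J\phi(x) = \prod_{i=1}^{d-1} \sqrt{1+\kappa_i(x,n(x))^2}. \]

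Applying the area formula to $\phi$ then gives, for every nonnegative Borel function $f$ on $\Nor(X)$,
\[ \int_{\Nor(X)} f(x,n)\, \diff \mathcal{H}^{d-1}(x,n) = \int_{\partial X} f(x,n(x)) \prod_{i=1}^{d-1} \sqrt{1+\kappa_i^2}\, \diff \omega_{\partial X}(x). \]
Plugging in $f(x,n) = \prod_{i=1}^{d-1} \module{1+t\kappa_i(x,n)}/\sqrt{1+\kappa_i(x,n)^2}$ cancels the $\sqrt{1+\kappa_i^2}$ factors coming from the Jacobian, and integrating the resulting identity over $t \in [0,R]$ via Fubini yields the claimed expression for $M_R(X)$.

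The computation has no real obstacle beyond the bookkeeping of the two orthonormal bases; the non-trivial content is entirely packaged in \Cref{prop:structure}, which guarantees that the abstract principal curvatures attached to the normal bundle agree with the classical eigenvalues of $dn$ in the smooth setting and that the tangent basis has the form stated above, making the Jacobian calculation a one-line verification.
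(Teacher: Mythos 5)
Your proof is correct and follows essentially the same route as the paper: the paper applies the change of variables to the bijection $(x,n(x)) \mapsto x$ with Jacobian $\prod_{i=1}^{d-1} (1+\kappa_i^2)^{-1/2}$, while you use the inverse graph map $x \mapsto (x,n(x))$ with the reciprocal Jacobian, which is the same computation. Your version merely spells out the bookkeeping (Rademacher, the orthonormal bases from \Cref{prop:structure}, the area formula) that the paper leaves implicit.
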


\begin{proof}
If $\partial X$ is smooth (or at least $C^{1,1}$) we obtain the equality with the definition by applying the change of variable with the bijective map $(x,n(x)) \mapsto x$ whose Jacobian is $\prod_{i=1}^{d-1} \frac{1}{\sqrt{1 + \kappa^2_i(x)}}$.
\end{proof}

\begin{proposition}[Bounds on $M_R(X)$]
\label{prop:bound}
Let $X$ admit a normal bundle. \\
If $R \leq \reach(X)$, we have:
\begin{equation}
M_R(X) = \sum_{i=1}^{d} R^{i}\omega_i V_{d-i}(X).
\end{equation}
For any $R > 0$, we have:
\begin{equation}
M_R(X) \leq C_{R}(d) \mathcal{H}^{d-1}(\Nor(X)),
\end{equation}
where 
\[ C_R(d) \coloneqq \int_{0}^R (1 +t^2)^{\frac{d-1}{2}} \diff t. \]
\end{proposition}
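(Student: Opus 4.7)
I would treat the two claims separately, handling the pointwise bound first since it is the cleaner of the two.

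For the inequality $M_R(X) \leq C_R(d)\,\mathcal{H}^{d-1}(\Nor(X))$, the plan is to bound the integrand pointwise. By the Cauchy--Schwarz inequality applied to the vectors $(1,\kappa_i)$ and $(1,t)$, one has $|1 + t\kappa_i| \leq \sqrt{1+\kappa_i^2}\sqrt{1+t^2}$, so that
\[ \frac{|1+t\kappa_i(x,n)|}{\sqrt{1+\kappa_i(x,n)^2}} \leq \sqrt{1+t^2}. \]
Taking the product over $i=1,\dots,d-1$ yields a pointwise bound by $(1+t^2)^{(d-1)/2}$, which is independent of $(x,n)$. Fubini then gives the announced bound.

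For the first claim, the key observation is that when $R \leq \reach(X)$, \Cref{prop:structure} ensures every principal curvature satisfies $\kappa_i \geq -\reach(X)^{-1} \geq -1/R$, so $1 + t\kappa_i \geq 0$ for all $t \in [0,R]$ and all regular pairs. The absolute values therefore disappear, and expanding the product in terms of the elementary symmetric polynomials gives
\[ M_R(X) = \sum_{k=0}^{d-1} \frac{R^{k+1}}{k+1} \int_{\Nor(X)} \frac{\Sigma_k(\kappa_1,\dots,\kappa_{d-1})}{\prod_{i=1}^{d-1}\sqrt{1+\kappa_i^2}}\,\diff \mathcal{H}^{d-1}(x,n). \]
It then suffices to identify each of these integrals with an intrinsic volume. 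Starting from the smooth formula $V_i(X) = \frac{1}{(d-i)\omega_{d-i}}\int_{\partial X} \Sigma_{d-i-1}(\kappa)\,\diff\omega$ recalled in the introduction and applying the change of variables $x \mapsto (x,n(x))$ from $\partial X$ to $\Nor(X)$ (whose Jacobian, by the orthonormal basis description in \Cref{prop:structure}, is $\prod 1/\sqrt{1+\kappa_i^2}$), one obtains the normal-bundle form
\[ V_{d-k-1}(X) = \frac{1}{(k+1)\omega_{k+1}} \int_{\Nor(X)} \frac{\Sigma_k(\kappa_1,\dots,\kappa_{d-1})}{\prod_{i=1}^{d-1}\sqrt{1+\kappa_i^2}}\,\diff\mathcal{H}^{d-1}. \]
Substituting and re-indexing by $i = k+1$ gives exactly $\sum_{i=1}^d R^i \omega_i V_{d-i}(X)$.

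\textbf{Expected obstacle.} The only genuinely delicate step is the change of variables linking the two integral expressions of $V_i$; one must justify using the smooth formula from the introduction at the $\mathcal{H}^{d-1}$-almost every regular pair, which is where \Cref{prop:structure} is essential. An alternative, avoiding this bookkeeping, is to recognize that the Jacobian of the normal map $(x,n,t) \mapsto x+tn$ on $\Nor(X) \times [0,R]$ is precisely $\prod |1+t\kappa_i|/\sqrt{1+\kappa_i^2}$, so that $M_R(X)$ equals $\mathcal{H}^d(X^R \setminus X)$ when $R \leq \reach(X)$ (the map being a bijection in that regime with nonnegative Jacobian), and then apply Federer's tube formula directly; this presentation may be cleaner and is likely the route the authors take.
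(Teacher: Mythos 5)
Your proposal is correct and follows essentially the same route as the paper: the inequality is exactly the pointwise Cauchy--Schwarz bound $\module{1+t\kappa}/\sqrt{1+\kappa^2}\leq\sqrt{1+t^2}$, and the equality is obtained by dropping the absolute values for $t\leq R\leq\reach(X)$ and identifying the integrated coefficient of $t^{i-1}$ with $i\,\omega_i V_{d-i}(X)$. The paper simply asserts this last integral identity over $\Nor(X)$ without the change-of-variables derivation you give, so your main route (and your tube-formula alternative) is, if anything, more explicit about the one point that requires the normal-bundle representation of the intrinsic volumes.
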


\begin{proof}The equality comes from the fact that for any $0 \leq t < \reach(X)$, $1 + t \kappa_i \geq 0$ and thus the coefficient in front of $t^{i-1}$ integrates over $\Nor(X)$ to $i V_{d-i}(X)\omega_i$ for any $1 \leq  i \leq d$. The inequality comes from the elementary fact that ${\frac{\module{1 + t x}}{\sqrt{1 + x^2}} \leq \sqrt{1 +t^2}}$ for any $t, x \in \R$.
\end{proof}

When $X$ is a Lipschitz domain and admits a normal bundle, the work of Rataj \& Zähle \cite[Theorem 9.35]{ZahleRataj} allows us to define the intrinsic volumes through the principal kinematic formula:
\begin{proposition}[]
Let $X$ be a set admitting a normal bundle.
The quantity \begin{equation*} Q_X(r) = 
    \int_{\R^d} \chi(X \cap B(x,r)) \diff x
\end{equation*} is a polynomial of degree $d$ in $r$. We call this polynomial the Steiner polynomial of $X$ and define the intrinsic volumes of $X$ from its coefficients as in the positive reach case.
\end{proposition}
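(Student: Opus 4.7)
The plan is to derive $Q_X(r)$ as an explicit integral over $\Nor(X) \times [0,r]$ whose integrand is polynomial in $t$ of degree $d-1$; integration in $t$ then produces a polynomial of degree $d$ in $r$. This strategy is the content of the Rataj--Zähle kinematic formula (Theorem 9.35 there), which I would invoke after establishing the following pieces.

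First, I would attach to $X$ an oriented $(d-1)$-rectifiable ``normal cycle'' supported on $\Nor(X)$, equipped with the orientation dictated by \Cref{prop:structure}, namely the orthonormal frame $\left((1+\kappa_i^2)^{-1/2}(b_i, \kappa_i b_i)\right)_{1 \leq i \leq d-1}$. When $\reach(X) > 0$, this coincides with Federer's classical construction; when only $\reach(\complementaire X) > 0$ and $X$ is a Lipschitz domain, the relation $\Nor(X) = -\Nor(\complementaire X)$ transfers the orientation (with a sign flip), so the construction still goes through.

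Second, I would compute $\chi(X \cap B(x,r))$ by Morse theory for almost every $x$. For generic $x$, the function $y \mapsto \norme{y - x}$ restricted to $X$ has only non-degenerate critical points, so $\chi(X \cap B(x,r))$ equals the signed count of critical points at distance $\leq r$. Each such critical point is a triple $(y,n,t) \in \Nor(X) \times [0,r]$ with $x = y + tn$, and its Morse index is controlled by the signs of the quantities $(1 + t\kappa_i(y,n))_i$. Applying the area formula to the ``tube map'' $\Phi : (y,n,t) \mapsto y + tn$, whose signed Jacobian is $\prod_{i=1}^{d-1} \frac{1 + t \kappa_i}{\sqrt{1+\kappa_i^2}}$ (carrying its signs, which match the Morse indices), and applying Fubini, one obtains
\begin{equation*}
Q_X(r) \;=\; \int_0^r \int_{\Nor(X)} \prod_{i=1}^{d-1} \frac{1 + t\,\kappa_i(y,n)}{\sqrt{1 + \kappa_i(y,n)^2}} \, \diff \mathcal{H}^{d-1}(y,n) \, \diff t.
\end{equation*}
Expanding the product gives a polynomial of degree $d-1$ in $t$, whose antiderivative on $[0,r]$ is a polynomial of degree $d$ in $r$, as required.

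The main obstacle is justifying the Morse-theoretic and area-formula steps without smoothness of $\partial X$. Here \Cref{prop:structure} is the key input: it guarantees that $\Nor(X)$ is $(d-1)$-rectifiable with well-defined principal curvatures $\mathcal{H}^{d-1}$-almost everywhere, which is exactly the regularity needed both to apply the area formula and to interpret the signed Jacobian as the correct orientation. Sard's theorem for the Lipschitz map $\Phi$ then ensures that the ``generic $x$'' Morse-count identity holds for $\mathcal{H}^d$-almost every $x \in \R^d$, completing the reduction. A sanity check is consistency with \Cref{prop:bound}: when $r \leq \reach(X)$ every factor $1 + t\kappa_i$ is nonnegative, the signed and absolute products coincide, and the formula specializes to $\sum_{i=1}^d r^i \omega_i V_{d-i}(X)$, matching the Steiner polynomial of the positive reach case.
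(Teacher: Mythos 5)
The paper does not actually prove this proposition: it is obtained by directly citing Rataj--Zähle \cite[Theorem 9.35]{ZahleRataj}, and your proposal is in effect a sketch of the proof of that cited theorem. Within that sketch there is one concrete error and one genuine gap. The error: your displayed formula gives $Q_X(0)=0$, whereas $Q_X(0)=\int_{\R^d}\chi(X\cap\{x\})\diff x=\mathcal{H}^d(X)$. The critical point of $d_x$ at distance $0$ (the global minimum, present whenever $x\in X$) is not seen by the tube map $\Phi$ on $\Nor(X)\times[0,r]$; the correct identity is $Q_X(r)=\mathcal{H}^d(X)+\int_0^r\int_{\Nor(X)}\prod_i\frac{1+t\kappa_i}{\sqrt{1+\kappa_i^2}}\diff\mathcal{H}^{d-1}\diff t$. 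Your own sanity check betrays this: your sum starts at $i=1$, i.e.\ it reproduces the quantity in \Cref{prop:bound} which omits the $\omega_0 V_d(X)$ term, and it is exactly why \Cref{lemma:number_crit} carries the extra $\one_{X^{2\eps}}(x)$. Polynomiality of degree at most $d$ survives (you are off by an additive constant), but the identity as written is false.

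The gap: the step ``$\chi(X\cap B(x,r))$ equals the signed count of nondegenerate critical points weighted by $(-1)^{\mathrm{index}}$'' is the hard content of the theorem you are trying to prove, and \Cref{prop:structure} does not deliver it. Rectifiability of $\Nor(X)$ and a.e.-defined principal curvatures justify the area-formula computation of the (signed or unsigned) number of preimages of $\Phi$; they do not tell you that crossing a nondegenerate critical value of $d_{x|X}$ changes the homotopy type of the sublevel set by a cell attachment of the stated index, which is what converts a signed critical-point count into an Euler characteristic. This is classical for smooth boundaries or positive reach, but for a set that merely satisfies $\reach(\complementaire{X})>0$ and is a Lipschitz domain it is precisely what the Rataj--Zähle normal-cycle machinery (closedness of the Legendrian cycle plus a local index condition) establishes. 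Note also that the paper's own appeal to generalized Morse theory in \Cref{lemma:number_crit} is only for offsets $X^{2\eps}$ of sets with positive $\mu$-reach and only yields a bound on the \emph{number} of homology changes, not the signed Euler-characteristic identity. So either you invoke Theorem 9.35 outright --- which is what the paper does, making the rest of your derivation redundant --- or you must supply the non-smooth Morse theory yourself; as written, the key step of your sketch is circular.
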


Offsets of possibly irregular sets often admit a normal bundle.
Indeed, for every $0 < t < \reach_{\mu}(X)$ we know that 
$\complementaire{(X^t)}$ is a Lipschitz domain with positive reach \cite{ChazalDoubleOffsets}, showing that $X^t$ admits a normal bundle. When $X$ is a subanalytic subset of $\R^d$ all but a finite number of offsets $X^{t}$ have a positive $\mu$-reach for some $\mu \in (0,1]$, see \cite{FuSubAnalytic}. As a consequence, all but a finite number of offsets of a subanalytic set admit a normal bundle.

\section{Basics in persistent homology}
\label{sec:persistance}
This section gives a summary of the classical notions of persistence theory so as to make the paper more self-contained. Further information about this topic can be found in \cite{StructureStability}.

\begin{definition}[Persistence modules]
Let $\mathbb{K}$\footnote{The results in this paper do not depend on the choice of the field $\mathbb{K}$.} be a field . A persistence module $M$ is a functor $\R \to \vect_{\mathbb{K}}$, that is a collection of vector spaces $(M_t)_{t \in \R}$ and maps $\phi^{t}_s : M_s \to M_t$ for any $s \leq t \in \R$, such that $\phi^u_{t} \circ \phi^{t}_s = \phi^{u}_s$ for any ordered triple $s \leq t \leq u$ in $\R$.
\end{definition}

We say that a persistence module is \emph{pointwise finite dimensional} when $\dim(M_t) < \infty$ for all $t \in \R$. Such modules admit the following structure theorem:

\begin{proposition}[Interval decomposition of persistence modules]

Given an interval $I \subset \R$, $\one_{I}$ is the persistence module defined by
\[ 
(\one_{I})_t \coloneqq \left \{
\begin{array}{cc}
     \mathbb{K} & \text{ when } t \in I  \\
     0 & \text{ otherwise} 
\end{array} \right. \]
and such that the linear maps between $(\one_{I})_s \to (\one_{I})_{t} $ are the identity when $s < t \in I$.
 Let $M$ be a pointwise finite dimensional persistence module. Then there exists a multiset $\mathcal{I}$ of intervals of $\R$ such that:
\[ M = \bigoplus_{I \in \mathcal{I}} \one_{I}. \]

 The \emph{persistence diagram} $\dgm(M)$ associated to the persistence module $M$ is the multiset of $\R^2$ whose coordinates are the bounds the intervals decomposing $M$:
\[\dgm(M) \coloneqq \bigsqcup_{I \in \mathcal{I}} \{ (\inf I, \sup I) \}.\]
\end{proposition}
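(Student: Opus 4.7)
The plan is to invoke (or reproduce the key steps of) the Crawley-Boevey decomposition theorem for pointwise finite-dimensional persistence modules indexed by $\R$. When $\R$ is replaced by a finite totally ordered set, the statement is classical: it follows from Gabriel's theorem applied to type $A_n$ quivers, whose indecomposable representations are precisely the interval modules $\one_I$. Hence every pointwise finite-dimensional representation of such a quiver is a direct sum of interval representations, and the job reduces to lifting this from the finite to the continuous case.

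To extend the result to an $\R$-indexed module $M$, I would first attach birth and death parameters to each nonzero $v \in M_t$ by setting
\[ b(v) \coloneqq \inf \{ s \leq t \tq v \in \im \phi^t_s \}, \quad d(v) \coloneqq \sup \{ u \geq t \tq \phi^u_t(v) \neq 0 \}. \]
These produce a candidate interval $I(v)$ with endpoint inclusion determined by whether the infimum and supremum are attained. Pointwise finite-dimensionality ensures that at any fixed $t$, only finitely many distinct such intervals contain $t$, since representative vectors with distinct birth-death profiles contribute linearly independently to $M_t$; this also gives the module a well-defined notion of Krull-Schmidt refinement.

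The main obstacle is to assemble these candidate intervals into an honest direct sum decomposition of $M$, rather than a mere collection of interval submodules that intersect in complicated ways. The strategy is to Zorn up the family of partial interval decompositions ordered by inclusion of summands: at each stage, one picks a vector $v$ whose birth-death interval is extremal (for example, minimal $b(v)$ with maximal $d(v)$ compatible), then splits off the corresponding $\one_{I(v)}$ as a direct summand by choosing a $\phi$-compatible complement at every index simultaneously. The delicate point, which is the crux of Crawley-Boevey's argument, is precisely this coherent selection of complements: the finite-dimensional Krull-Schmidt property holds at each single index $t$, but one must make the choices agree across all transition maps $\phi^t_s$. A transfinite induction combined with the pointwise finite-dimensional hypothesis makes the process terminate and yields the full decomposition.

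Once existence is established, the uniqueness of the multiset of intervals, and hence the well-definedness of $\dgm(M)$ as a multiset in $\R^2$, is a consequence of Krull-Schmidt uniqueness applied pointwise after localising to finite windows $[a,b] \subset \R$ and taking a direct limit as $a \to -\infty$, $b \to +\infty$.
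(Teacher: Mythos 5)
The paper does not actually prove this proposition: it is stated as classical background and is covered by the citation to the persistence literature (it is Crawley--Boevey's decomposition theorem for pointwise finite-dimensional persistence modules, with Gabriel's theorem for $A_n$ quivers handling the finite case). So the question is whether your sketch stands on its own, and as written it does not. You correctly identify the crux --- producing, for a chosen interval submodule $\one_{I(v)} \subset M$, a complement $N_t \subset M_t$ at \emph{every} index $t$ simultaneously with $\phi^t_s(N_s) \subset N_t$ --- but you give no argument for it. This is not a routine pointwise application of Krull--Schmidt: making the complements coherent across all of $\R$ is exactly where Crawley--Boevey's functorial-filtration argument (the analysis of the subspaces $\im \phi^t_s$, $\ker \phi^u_t$ and their one-sided limits, and the choice of bases of $M_t$ adapted to all of them at once) does real work. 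Moreover, even granting that one summand can be split off, "Zorning up" partial decompositions does not automatically exhaust $M$: one must show that the maximal partial decomposition has zero complement, which again requires the same pointwise analysis rather than following formally from transfinite induction.

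Two smaller issues. The claim that vectors of $M_t$ with pairwise distinct birth--death profiles are linearly independent needs proof, and the profile $I(v)$ of a single vector does not by itself record the open/closed status of the endpoints; these endpoint distinctions are invisible over finite index sets, which is precisely why a naive "decompose over finite grids via Gabriel and pass to the limit" strategy fails --- the finite-grid decompositions need not be compatible with one another. For uniqueness, your finite-window argument can be made to work, but it is cleaner to note that the multiplicity of each interval is determined by the ranks $\mathrm{rank}\, \phi^t_s$ and their one-sided limits, so $\mathcal{I}$ and hence $\dgm(M)$ are invariants of $M$. In short, your roadmap points at the right theorem and the right difficulty, but the central step is asserted rather than proved; for the purposes of this paper the intended "proof" is a citation to Crawley--Boevey.
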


There exists a natural distance over the space of persistence modules:

\begin{definition}[Morphisms, $\delta$-interleavings and interleaving distance]
Let $M,N$ be two persistence modules and let $\delta \geq 0$.\\
\begin{itemize}
    \item $M^{\delta}$ denotes the persistence module $(M_{t + \delta})_{t \in \R}$, i.e $M$ shifted by $\delta$.  
    \item A morphism $j$ between two persistence modules $N$ and $M$ is a natural transformation between functors, i.e a collection of linear maps $(j_t)_{t \in \R} : N_t \to M_t$ such that the left diagram in \Cref{fig:commute} commutes.
    \item  The modules $N$ and $M$ are $\delta$-\emph{interleaved} when there exist two morphisms  $u, v$ respectively from $M$ to $N^{\delta}$ and from $N$ to $M^{\delta}$ such that the right diagram in \Cref{fig:commute} commutes. 

    \item The \textit{interleaving distance} between $M$ and $N$ is defined as:
\[d_I(M,N) \coloneqq \inf \{ \delta \in \R^+ \tq M \text{ and } N \text{ are } \delta \text{-interleaved}\}. \]
\end{itemize}
\end{definition}

\begin{figure}[h!]
    \centering
\[\begin{tikzcd}
	{M_s} & {M_t} && {M_s} & {} & {M_{s+\delta}} && {M_{s+2\delta}} \\
	{N_s} & {N_t} && {N_s} & {} & {N_{s+\delta}} && {N_{s+2\delta}}
	\arrow["{j_s}", from=2-1, to=1-1]
	\arrow["{j_t}"', from=2-2, to=1-2]
	\arrow[from=1-1, to=1-2]
	\arrow[from=2-1, to=2-2]
	\arrow[from=1-4, to=1-6]
	\arrow[from=2-4, to=2-6]
	\arrow[from=2-6, to=2-8]
	\arrow[from=1-6, to=1-8]
	\arrow["{v_s}"{description, pos=0.25}, from=2-4, to=1-6]
	\arrow["{u_s}"{description, pos=0.25}, from=1-4, to=2-6]
	\arrow["{v_{s+\delta}}"{description, pos=0.25}, from=2-6, to=1-8]
	\arrow["{u_{s+\delta}}"{description, pos=0.25}, from=1-6, to=2-8]
\end{tikzcd}\]
    \caption{Commutative diagrams in the definition of morphisms (left) and $\delta$-interleaving (right).}
    \label{fig:commute}
\end{figure}
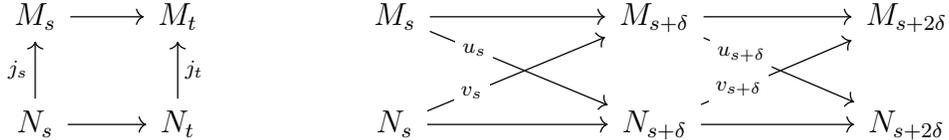

\begin{definition}[Bottleneck distance]
 A $\delta$-matching between two persistence diagrams $D, D'$ is a bijective map  $\gamma : C \to C'$, between subsets of $D, D'$ such that for any $c \in C$, $\norme{\gamma(c) - c}_{\infty} \leq \eps$, and such that for any $(a,b) \in (D \setminus C) \cup (D' \setminus C')$, $\module{a-b} \leq \eps$.

 The \emph{bottleneck distance} between two diagrams $D, D'$ is defined as:
\[ d_B(D, D') \coloneqq \inf \{ \delta \tq \text{There exists a } \delta \text{-matching between } D \text{ and } D' \}.\]

\end{definition}

\begin{theorem}[Isometry theorem]
For any pair of persistence modules $M,N$\!, we have:
\[ d_I(M,N) = d_B(\dgm(M), \dgm(N)).\] 
\end{theorem}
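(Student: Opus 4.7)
The plan is to establish the equality via the two inequalities $d_B(\dgm(M), \dgm(N)) \leq d_I(M,N)$ and $d_I(M,N) \leq d_B(\dgm(M), \dgm(N))$ separately. Throughout, the interval decomposition of the previous proposition lets me write $M = \bigoplus_{I \in \mathcal{I}} \one_I$ and $N = \bigoplus_{J \in \mathcal{J}} \one_J$, so that both the diagrams and any morphism between $M$ and $N$ are amenable to a bar-by-bar analysis.

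For the direction $d_I \leq d_B$, assume a $\delta$-matching $\gamma : C \to C'$ exists. I build a $\delta$-interleaving $(u, v)$ summand by summand. For each matched pair $(I, \gamma(I))$, the $\ell^\infty$-closeness of endpoints ensures that $I$ and $\gamma(I) - \delta$ overlap, so one may define the summand-level component of $u$ to be the identity $\one_I \to \one_{\gamma(I)}^\delta$ on the overlap and zero elsewhere, and similarly for $v$; these commute with the structure maps precisely because the endpoints of $I$ and $\gamma(I)$ differ by at most $\delta$. An unmatched bar has length at most $\delta$ in the paper's convention, so the zero morphism on the corresponding summand automatically interleaves with its ($2\delta$-shifted) structure maps. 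Summing the contributions yields global morphisms $u, v$ whose composites coincide with the required shifts.

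The direction $d_B \leq d_I$ is the substantive one. Given a $\delta$-interleaving $(u, v)$, I follow the induced-matching strategy. Let $r_M(s, t)$ denote the rank of the structure map $\phi^t_s : M_s \to M_t$; the interval decomposition gives the explicit count $r_M(s, t) = \# \{ I \in \mathcal{I} : \inf I \leq s \text{ and } \sup I \geq t \}$, and analogously for $N$. The interleaving factorizes the $2\delta$-shift of $M$ through $N$ as $\phi^{s+2\delta}_s = v_{s+\delta} \circ u_s$, and symmetrically for $N$ through $M$, yielding the two rank inequalities $r_M(s, s + 2\delta) \leq r_N(s - \delta, s + \delta)$ and $r_N(s, s + 2\delta) \leq r_M(s - \delta, s + \delta)$. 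A combinatorial argument, ordering the bars of $\mathcal{I}$ and $\mathcal{J}$ canonically and pairing them off level by level as $s$ varies, then produces a matching $\gamma$ in which every matched pair lies within $\ell^\infty$-distance $\delta$ and every unmatched bar is forced to be sufficiently short.

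The main obstacle is precisely this combinatorial step: turning the pointwise rank inequalities into a single globally consistent injection between the long bars of $\dgm(M)$ and $\dgm(N)$. The cleanest modern implementation is the induced-matching construction of Bauer and Lesnick, which builds $\gamma$ from the interval decompositions of the kernel and image of $u$ (or $v$) and verifies consistency via monotonicity of $r_M$ and $r_N$; once this is in place, the diagonal bound on unmatched points drops out of the rank accounting. As this argument is entirely internal to persistence module theory and thoroughly treated in \cite{StructureStability}, no geometric input beyond the structure theorem for pointwise finite dimensional modules is needed.
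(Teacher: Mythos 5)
The paper does not prove this theorem at all: it appears in Section~4 (``Basics in persistent homology'') as a classical background fact, implicitly delegated to the cited literature such as \cite{StructureStability}. Your outline is a correct reconstruction of the standard proof of that cited result. The easy direction (building a $\delta$-interleaving summand by summand from a $\delta$-matching, using that a nonzero morphism $\one_I \to \one_J^{\delta}$ exists exactly when the shifted endpoints are compatible, and killing short unmatched bars with the zero morphism) is complete as sketched. For the converse you correctly isolate the genuine difficulty --- converting the rank inequalities $r_M(s,s+2\delta) \leq r_N(s-\delta,s+\delta)$ into one globally consistent matching --- and you resolve it by appeal to the Bauer--Lesnick induced-matching theorem rather than proving it; since the paper itself uses exactly that machinery elsewhere (Lemma~4.2 of \cite{InducedMatchings} in the proof of \Cref{thm:bars}), this level of delegation matches the paper's own standards. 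One caveat worth flagging: the paper's definition of a $\delta$-matching requires unmatched points $(a,b)$ to satisfy $\module{a-b}\leq\delta$ rather than the usual $\module{a-b}\leq 2\delta$; with that normalization the easy direction still goes through (bars of length at most $2\delta$ are $\delta$-interleaved with zero), but the induced-matching theorem only guarantees unmatched bars of length at most $2\delta$, so under the paper's convention the statement would degrade to a two-sided inequality rather than an exact isometry. This is a defect of the paper's definition, not of your argument, but your proof of equality tacitly assumes the standard convention.
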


\section{Image Persistence}
\label{sec:image_persistence}

\begin{definition}[Image Persistence]
Let $M, N$ be two persistence modules and ${f : M \to N}$ be a morphism between them. The \emph{image persistence module $\im f$ is the persistence module with vector space} ${(\im f)_t = f(M_t)}$ at value $t \in \R$ and whose connecting maps $(\im f)_a \to (\im f)_b $ are the restrictions of $N_a \to N_b$ to $(\im f)_a$ for every pair $a \leq b \in \R$. 
\end{definition}

We will only deal with image persistence modules arising in the following situation. Let $\phi : \R^d \to \R$ and $A \subset B \subset \R^d$. For every $a \in \R$, there is an inclusion \[
\begin{tikzcd}
A \cap \phi^{-1}(-\infty,a] = A_a \ar[r, "\iota^a"] & B_a = B \cap \phi^{-1}(-\infty, a], 
\end{tikzcd} \]
which yields for every dimension $0 \leq j \leq d$ a morphism of persistence modules $\iota^{\bullet}_j$:
\[ 
 \iota^{\bullet}_j : H_j(A_{\bullet}) \to H_j(B_{\bullet}). 
\]  
We write $\dgm(\phi, A, B) = \bigsqcup_{j = 0}^{d} \dgm(\im \, \iota^{\bullet}_j)$ for the persistence diagram obtained by taking the direct sum of the homology induced modules in every dimension. Its \textit{Euler characteristic} $\chi(\dgm(\phi, A, B)(r))$ is the alternating sum of the ranks of $\iota^{\bullet}_j$ at filtration value $r$. 

We are now in position to prove a stability theorem for image persistence modules associated with sublevel set filtrations of locally Lipschitz functions.

\begin{theorem}[Image persistence stability theorem]
\label{thm:stability}
Let $h, \tilde{h} : \R^d \to \R$ be two real-valued function such that $\Vert h - \tilde{h} \Vert_{\infty} \leq \eps$ and $f: \R^d \to \R$ be a $\kappa$-Lipschitz function.  Denote $\tilde{X}^a = \tilde{h}^{-1}( - \infty, a]$ and $X^a = h^{-1}(-\infty, a]$. Suppose that there exists $\mu > 0$ such that on $X^{2\eps} \setminus X^{-2\eps}$, $h$ is locally Lipschitz and $\Delta(\clarke h(x)) \geq \mu$. Then we have:
\begin{equation}
    d_B(\mathrm{dgm}(f, \tilde{X}^{-\eps}, \tilde{X}^{\eps}), \mathrm{dgm}(f_{|X})) \leq \frac{2 \kappa \eps}{\mu}.
\end{equation}
\end{theorem}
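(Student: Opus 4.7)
The plan is to construct, for every $\sigma > 0$, a $\tau_\sigma$-interleaving with $\tau_\sigma := 2\kappa\eps/(\mu - \sigma)$ between the two persistence modules appearing in the statement, and to conclude by the isometry theorem after letting $\sigma \to 0$. Write $M^r := H_*(X \cap f^{-1}(-\infty, r])$ (with $X = X^0$) for the module whose diagram is $\dgm(f_{|X})$, and $N^r := \im\bigl(H_*(\tilde X^{-\eps}\cap f^{-1}(-\infty, r]) \to H_*(\tilde X^{\eps}\cap f^{-1}(-\infty, r])\bigr)$ for the image persistence module whose diagram is $\dgm(f, \tilde X^{-\eps}, \tilde X^{\eps})$. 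From $\norme{h - \tilde h}_\infty \leq \eps$ comes the chain of inclusions $X^{-2\eps} \subset \tilde X^{-\eps} \subset X \subset \tilde X^{\eps} \subset X^{2\eps}$, which lets us freely navigate between these sets at the cost of an $\eps$-slack.

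The geometric input comes from two applications of \Cref{thm:flow} to $h$, legal since $h^{-1}(0, 2\eps]$ and $h^{-1}(-2\eps, 0]$ both lie in the regularity region $X^{2\eps}\setminus X^{-2\eps}$. Applying the theorem with $(a, b) = (0, 2\eps)$ produces a deformation retraction $\pi_1 : X^{2\eps} \to X$ with $\pi_1|_X = \Id$, realized by a homotopy inside $X^{2\eps}$; applying it with $(a, b) = (-2\eps, 0)$ produces a deformation retraction $\pi_2 : X \to X^{-2\eps}$ with $\pi_2|_{X^{-2\eps}} = \Id$, realized by a homotopy inside $X$. Both flows are $\bigl(2\eps/(\mu - \sigma)\bigr)$-Lipschitz in time, so each retraction and its homotopy to the identity move points by at most $2\eps/(\mu - \sigma)$, which, combined with the $\kappa$-Lipschitzness of $f$, yields $f$-distortion at most $\tau_\sigma$.

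I would then define $u : M^r \to N^{r + \tau_\sigma}$ by $u(z) := \iota_*\pi_{2*}(z)$ (viewing $\pi_{2*} z$ inside $H_*(\tilde X^{-\eps}\cap f^{-1}(-\infty, r + \tau_\sigma])$ via $X^{-2\eps} \subset \tilde X^{-\eps}$, then including into $H_*(\tilde X^{\eps}\cap f^{-1}(-\infty, r + \tau_\sigma])$, landing in $N^{r+\tau_\sigma}$) and $v : N^r \to M^{r + \tau_\sigma}$ by $v(w) := \pi_{1*}(w)$ (which lands in $M^{r + \tau_\sigma}$ since $\pi_1(\tilde X^{\eps}) \subset X$). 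The composition $v \circ u$ simplifies because $\pi_1$ is the identity on $X \supset \tilde X^{-\eps}$, giving $v \circ u(z) = \pi_{2*}(z)$ viewed in $M^{r + 2\tau_\sigma}$; the homotopy between $\pi_2$ and $\Id_X$ inside $X$ with $f$-distortion $\tau_\sigma$ then identifies this with the connecting map $M^r \to M^{r + 2\tau_\sigma}$.

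The main obstacle is the verification of $u \circ v$, since naively the homotopy between $\pi_1$ and $\Id_{X^{2\eps}}$ escapes $\tilde X^{\eps}$. The resolution exploits the image persistence structure: any $w \in N^r$ lifts to some $\tilde w \in H_*(\tilde X^{-\eps}\cap f^{-1}(-\infty, r])$, and since $\pi_1|_{\tilde X^{-\eps}} = \Id$, one has $v(w) = \tilde w$ viewed in $M^{r + \tau_\sigma}$ via $\tilde X^{-\eps} \subset X$, independently of the lift. Then $u \circ v (w) = \iota_*\pi_{2*}\tilde w$, and because the homotopy $\pi_2 \simeq \Id_X$ stays inside $X \subset \tilde X^{\eps}$ with $f$-distortion $\tau_\sigma$, it identifies $\pi_{2*}\tilde w$ with $\tilde w$ inside $H_*(\tilde X^{\eps}\cap f^{-1}(-\infty, r + 2\tau_\sigma])$, so $u \circ v (w)$ matches the connecting map image of $w$ in $N^{r + 2\tau_\sigma}$. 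Letting $\sigma \to 0$ yields $d_I(M, N) \leq 2\kappa\eps/\mu$, and the isometry theorem concludes.
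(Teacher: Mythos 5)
Your proposal is correct and follows essentially the same route as the paper: the two flow-induced retractions $X^{2\eps}\to X$ and $X\to X^{-2\eps}$ from \Cref{thm:flow}, the sandwich $X^{-2\eps}\subset\tilde X^{-\eps}\subset X\subset\tilde X^{\eps}\subset X^{2\eps}$, and the resulting $\tau_\sigma$-interleaving between the image module and $H_*(X_\bullet)$, followed by $\sigma\to 0$ and the isometry theorem. The only difference is that you verify the interleaving identities for $u\circ v$ and $v\circ u$ explicitly (correctly exploiting that the homotopy $\pi_2\simeq\Id_X$ stays inside $X\subset\tilde X^{\eps}$), whereas the paper packages this into a commutative diagram and defers the verification to the image-persistence stability argument of its reference.
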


\begin{proof}
This is an extension of the stability theorem for noisy domains from \cite{PersistentImage} to Lipschitz functions. We adapt this proof to our setting using approximate inverse flows of Lipschitz functions obtained in \Cref{thm:flow}.
For any $\sigma > 0$, take $C_{\sigma}(\cdot, \cdot)$ a continuous deformation retraction between $X^{2\eps}$ and $X$ given by \Cref{thm:flow}. Let $x \in X^{2\eps}$. Each trajectory $C_{\sigma}(\cdot,x)$ is $\frac{2\eps}{\mu -\sigma}$-Lipschitz and needs at most time $1$ to send $x$ to $X$. For every $a \in \R$, $C_{\sigma}(1, \cdot) : X^{2\eps}_a \to X_{a + c}$ is a continuous map with $c \coloneqq \frac{2\kappa\eps}{\mu - \sigma}$. With the same reasoning we obtain a continuous map $X_a \to X^{-2\eps}_{a + c}$. The homotopies induced by the flows yield the following commutative diagram, where the vertical and horizontal maps are induced by inclusions:

\[\begin{tikzcd}
	{} & {H_*(X^{2\varepsilon}_a)} & {H_*(X^{2\varepsilon}_{a+c})} & {H_*(X^{2\varepsilon}_{a+2c})} & {} \\
	{} & {H_*(\tilde{X}^{\varepsilon}_a)} & {H_*(\tilde{X}^{\varepsilon}_{a+c})} & {H_*(\tilde{X}^{\varepsilon}_{a+2c})} & {} \\
	{} & {H_*(X_a)} & {H_*(X_{a+c})} & {H_*(X_{a+2c})} & {} \\
	{} & {H_*(\tilde{X}^{-\varepsilon}_a)} & {H_*(\tilde{X}^{-\varepsilon}_{a+c})} & {H_*(\tilde{X}^{-\varepsilon}_{a+2c})} & {} \\
	{} & {H_*(X^{-2\varepsilon}_a)} & {H_*(X^{-2\varepsilon}_{a+c})} & {H_*(X^{-2\varepsilon}_{a+2c})} & {}
	\arrow[from=4-2, to=3-2]
	\arrow[from=3-2, to=2-2]
	\arrow[draw={rgb,255:red,214;green,92;blue,92}, from=2-2, to=1-2]
	\arrow[from=5-2, to=4-2]
	\arrow[draw={rgb,255:red,214;green,92;blue,92}, from=1-2, to=3-3]
	\arrow[from=1-2, to=1-3]
	\arrow[from=3-2, to=3-3]
	\arrow[from=5-2, to=5-3]
	\arrow[draw={rgb,255:red,92;green,92;blue,214}, from=3-3, to=5-4]
	\arrow[from=5-3, to=4-3]
	\arrow[from=4-3, to=3-3]
	\arrow[from=3-3, to=2-3]
	\arrow[from=2-3, to=1-3]
	\arrow[from=1-3, to=1-4]
	\arrow[from=3-3, to=3-4]
	\arrow[from=5-3, to=5-4]
	\arrow[from=3-4, to=2-4]
	\arrow[from=4-4, to=3-4]
	\arrow[draw={rgb,255:red,92;green,92;blue,214}, from=5-4, to=4-4]
	\arrow[from=2-4, to=1-4]
	\arrow[from=3-2, to=5-3]
	\arrow[from=1-3, to=3-4]
	\arrow[dashed, from=1-1, to=1-2]
	\arrow[dashed, from=2-1, to=2-2]
	\arrow[dashed, from=3-1, to=3-2]
	\arrow[dashed, from=4-1, to=4-2]
	\arrow[dashed, from=5-1, to=5-2]
	\arrow[dashed, from=5-4, to=5-5]
	\arrow[dashed, from=4-4, to=4-5]
	\arrow[dashed, from=3-4, to=3-5]
	\arrow[dashed, from=2-4, to=2-5]
	\arrow[dashed, from=1-4, to=1-5]
	\arrow[from=2-2, to=2-3]
	\arrow[from=2-3, to=2-4]
	\arrow[from=4-2, to=4-3]
	\arrow[from=4-3, to=4-4]
\end{tikzcd}\]
As in \cite{PersistentImage}, the two colored arrow paths provide interleavings showing that, for every $\sigma \in (0, \mu)$:
\[
d_B(\dgm(f, \tilde{X}^{-\eps},\tilde{X}^{\eps}), \dgm(f_{|X}) ) \leq \frac{2 \kappa \eps }{\mu - \sigma}. \] 
\end{proof}

Now let $X, Y$ be two compact sets of $\R^d$ and $f = d_x : z \mapsto \norme{z -x}$ be the distance function to any point $x$. Applying the previous theorem with $h = d_X + 2\eps$, $\tilde{h} = d_Y + 2\eps$ yields the following corollary:
\begin{corollary}[Image stability theorem for compact sets]
\label{cor:stability_compact}
Let $\mu \in (0,1]$, $\eps > 0$ and $X, Y$ be two compact subsets of $\R^d$ such that $d_H(X,Y) \leq \eps \leq \frac{1}{4} \reach_{\mu}(X)$. Then for any $x \in \R^d$, 
\begin{equation}
    d_B(\dgm(d_x, Y^{\eps}, Y^{3\eps}), \dgm(d_{x_{|X^{2\eps}}})) \leq \frac{2\eps}{\mu}.
\end{equation}
\end{corollary}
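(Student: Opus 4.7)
The plan is to obtain this statement as a direct specialization of the image persistence stability theorem (Theorem 4.1), so the work reduces to making the right substitution and checking the hypotheses. I would take $h(z) = d_X(z) - 2\eps$, $\tilde{h}(z) = d_Y(z) - 2\eps$, and $f = d_x$. With these choices, the sublevel sets appearing in Theorem 4.1 translate to the ones in the corollary: the zero-sublevel set of $h$ is $X^{2\eps}$ (this is what plays the role of the clean domain $X$ in the theorem), the $\pm 2\eps$-sublevel sets of $h$ are $X^{4\eps}$ and $X^0 = X$, and the $\pm\eps$-sublevel sets of $\tilde{h}$ are $Y^{3\eps}$ and $Y^{\eps}$, respectively.

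Next I would verify the three hypotheses of Theorem 4.1. First, since both distance functions are $1$-Lipschitz and $\|d_X - d_Y\|_\infty \leq d_H(X,Y)$, we have $\|h - \tilde{h}\|_\infty = \|d_X - d_Y\|_\infty \leq \eps$. Second, $f = d_x$ is $1$-Lipschitz, so I can take $\kappa = 1$. Third, and the point that requires the most care, is the generalized gradient condition: I need $\Delta(\clarke h(z)) \geq \mu$ for every $z \in X^{4\eps} \setminus X$. Since adding a constant leaves the Clarke subdifferential unchanged, $\clarke h = \clarke d_X$, and the inequality $4\eps \leq \reach_{\mu}(X)$ coming from the assumption $\eps \leq \tfrac{1}{4}\reach_{\mu}(X)$ places every point of $X^{4\eps}\setminus X$ in the regime where Definition 2.2 forces $\Delta(\clarke d_X) \geq \mu$.

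Having checked the three hypotheses, the conclusion of Theorem 4.1 reads
\[
d_B\bigl(\dgm(d_x,\tilde{X}^{-\eps},\tilde{X}^{\eps}),\,\dgm(d_x{}_{|X^{2\eps}})\bigr) \;\leq\; \frac{2\kappa\eps}{\mu} \;=\; \frac{2\eps}{\mu},
\]
which is exactly the desired bound once the dictionary $\tilde{X}^{-\eps} = Y^{\eps}$, $\tilde{X}^{\eps} = Y^{3\eps}$ is applied. The only real obstacle is bookkeeping: picking the correct shift constant ($-2\eps$) so that the generalized gradient condition of Theorem 4.1, stated on the shell $h^{-1}(-2\eps, 2\eps]$, lines up with the shell $X^{4\eps}\setminus X$ where the $\mu$-reach hypothesis of the corollary actually applies; once that alignment is in place the argument is purely formal.
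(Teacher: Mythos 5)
Your proof is correct and follows the paper's own derivation exactly: the corollary is obtained by specializing \Cref{thm:stability} to shifted distance functions $h, \tilde h$ together with $f = d_x$, and your verification of the three hypotheses (the $\ell^\infty$ bound via $\norme{d_X - d_Y}_\infty \leq d_H(X,Y)$, the Lipschitz constant $\kappa = 1$, and the gradient lower bound on the shell $X^{4\eps}\setminus X$ from $4\eps \leq \reach_\mu(X)$) is precisely the intended argument. The only discrepancy is the sign of the shift: the paper writes $h = d_X + 2\eps$, whereas your choice $h = d_X - 2\eps$ is the one that actually makes $h^{-1}(-\infty,0] = X^{2\eps}$ and $\tilde h^{-1}(-\infty,-\eps] = Y^{\eps}$, $\tilde h^{-1}(-\infty,\eps] = Y^{3\eps}$, so you have in effect corrected a sign typo in the paper rather than deviated from it.
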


Now we use results from Bauer \& Lesnick \cite{InducedMatchings} to show that a persistence module sandwiched between two persistence modules cannot be smaller in a certain sense than the image persistence module. We formalize that by saying that a persistence diagram $D'$ \textit{injects into} another persistence diagram $D$ when there is an injective map $\phi : D' \to D$ such that
$\phi((a',b')) = (a,b)$ with $a \leq a'$ and $b' \leq b$ for all $(a',b')$ in $D'$.

\begin{theorem}[Interleaving of bars in image persistence]
\label{thm:bars}
Let $A, B, C$ be persistent modules that are decomposable into intervals. Let furthermore be $\varphi, \psi$ morphisms of persistent modules and write $j = \psi \circ \varphi$:
\[
\begin{tikzcd}
A \ar[r, "\varphi"] \ar[rr, bend left, "j"] & B \ar[r, "\psi"]& C
\end{tikzcd} \]

Then $\dgm(\mathrm{im} \, j)$ injects into $\dgm(B)$. In particular, if $B$ has a finite decomposition in intervals, then so does $\im j$. 
\end{theorem}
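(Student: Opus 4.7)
The plan is to factor the composition $j = \psi \circ \varphi$ through $\im \varphi$ and then invoke the induced matching machinery of Bauer--Lesnick twice, once on a surjection and once on an inclusion.

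First I would observe that $j$ admits a natural factorization
\[
A \xrightarrow{\;\tilde\varphi\;} \im \varphi \xrightarrow{\;\tilde\psi\;} \im j,
\]
where $\tilde\varphi$ is just $\varphi$ regarded as a morphism onto its image (so it is a surjection of persistence modules) and $\tilde\psi$ is the restriction of $\psi$ to $\im \varphi$ (which is again a surjection, since by definition $\im j = \psi(\im \varphi)$). In addition, $\im \varphi$ is a submodule of $B$ via the inclusion $\iota : \im \varphi \hookrightarrow B$. This reduces the problem to producing two matchings: one for the surjection $\tilde\psi: \im \varphi \twoheadrightarrow \im j$ and one for the inclusion $\iota : \im \varphi \hookrightarrow B$.

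Next, I would apply the Bauer--Lesnick induced matching theorem to each of these morphisms. The content of their result is that any injection of interval-decomposable persistence modules induces an injective matching from the bars of the source to the bars of the target in which each source bar $[b,d)$ is sent to a bar $[b',d')$ of the target satisfying $b' \leq b$ and $d \leq d'$; dually, any surjection induces an injective matching from the bars of the target to the bars of the source with the same containment property. Explicitly, $\iota$ yields an injection $\dgm(\im \varphi) \hookrightarrow \dgm(B)$ that enlarges intervals on both sides, and $\tilde\psi$ yields an injection $\dgm(\im j) \hookrightarrow \dgm(\im \varphi)$ that also enlarges intervals on both sides.

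Finally, I would compose these two injections to obtain a single injective map $\phi : \dgm(\im j) \hookrightarrow \dgm(B)$. A bar $(a',b')$ of $\im j$ is sent first to a bar $(a'',b'')$ of $\im \varphi$ with $a'' \leq a'$ and $b' \leq b''$, then to a bar $(a,b)$ of $B$ with $a \leq a''$ and $b'' \leq b$; combining the inequalities gives $a \leq a'$ and $b' \leq b$, which is exactly the containment property in the statement. The final finiteness assertion is then immediate: if $\dgm(B)$ is finite and $\phi$ is injective, then $\dgm(\im j)$ is finite as well.

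The main obstacle here is purely bookkeeping: one must make sure that Bauer--Lesnick applies in the generality we need (interval-decomposable modules, as in the hypothesis, rather than just pointwise finite-dimensional or $q$-tame), and that the direction of the endpoint inequalities for the injection/surjection case is correctly tracked so that the two matchings compose into a single containment-preserving injection. Once these directions are pinned down, no further analytical work is required.
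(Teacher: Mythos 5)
Your proof is correct and takes essentially the same route as the paper: both factor $j$ through $\im \varphi$ and apply the Bauer--Lesnick induced-matching lemma to the monomorphism $\im \varphi \hookrightarrow B$ and the epimorphism $\im \varphi \twoheadrightarrow \im j$, then compose the two barcode injections. The only (harmless) imprecision is that you describe both matchings as enlarging bars on both ends, whereas Bauer--Lesnick fixes one endpoint in each case (the monomorphism extends bars to the left, the epimorphism to the right); your weaker containment statement still yields the required inequalities.
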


\begin{proof}
 The morphisms of persistence modules $im (\varphi) \to B$, $im (\varphi) \to im (j)$ are respectively monomorphism and an epimorphism of persistence modules. By Lemma 4.2 in \cite{InducedMatchings}, we know that there exist injections of barcodes $\dgm(\im \, \varphi) \hookrightarrow \mathcal \dgm(B)$ and $\dgm(\im \, j) \hookrightarrow \dgm( \im \, \varphi)$ respectively extending the intervals to the left and to the right.
\end{proof}

\begin{remark}
For a persistence diagram $D$ and for two real numbers $a < b$, define $N_a^b(D)$ to be the total number of bars of $D$ intersecting with $[a,b]$. The theorem above implies that, with the same notations, $N_a^b(\dgm(\im j)) \leq N_a^b(\dgm(B))$. This is a generalization to persistence modules of the fact that the rank of a linear map cannot exceed the dimension of a vector space it factors through.
\end{remark}

\section{Persistent Intrinsic Volumes}
\label{sec:persistent_volumes}
In this section, we define the persistent intrinsic volumes $V^{\eps,R}_i(Y)$, where $\eps, R$ are positive numbers, and state our main results.

\begin{definition}[Persistent Intrinsic Volumes]
\label{def:persistent_volumes}
Let $Y \subset \R^d$ be closed, $x \in \R^d$ and $\eps \geq 0$. 
We let $D^{\eps,x}_Y \coloneqq \dgm(d_x, Y^{\eps}, Y^{3\eps})$.
When $\eps = 0$, we write $D^{x}_Y \coloneqq \dgm(d_{x_{|Y}})$.
\begin{itemize}
    \item The \emph{persistent Steiner function} $Q^{\eps}_Y$ is defined by:
    \begin{equation*}
    Q^{\eps}_Y(r) \coloneqq \int_{x \in \R^d} \chi(D^{\eps,x}_Y(r)) \mathrm{d} x.
    \end{equation*}
    \item Given $R >0$, the persistent Steiner polynomial is the orthogonal projection of $Q^{\eps}_Y$ restricted to $[0,R]$ on the space of polynomials of degree at most $d$ for the scalar product of $L^2\left ([0,R] \right)$.
    \item Writing the persistent Steiner polynomial as $\sum_{i=0}^d \omega_{i}V^{\eps, R}_{d-i}(Y) r^i$, the \newline rescaled coefficients $(V^{\eps, R}_i(Y))_{0 \leq i \leq d}$ are the \emph{persistent intrinsic volumes} of $Y$.
\end{itemize}
\end{definition}

\begin{remark}[Persistent intrinsic volumes are well-defined]
\label{rmk:well-defined}
When $Y$ is compact and $\eps$ is positive, one can always find a finite simplicial complex $C$ such that $Y^{\eps} \subset C \subset Y^{3\eps}$. By \Cref{thm:bars} the diagrams $D^{\eps,x}_Y$ inject into $D^x_C$ for every $x$, implying that their size is bounded by the number of simplices of $C$. In particular, the persistent Steiner function is locally bounded, and persistent intrinsic volumes are well-defined without any regularity condition on $Y$.
\end{remark}

The following is an immediate consequence of \Cref{cor:stability_compact} and \Cref{thm:bars}.

\begin{proposition}[Diagram approximation]
\label{prop:approx_diag}
Let $X,Y$ be compact subsets of $\R^d$ and $\eps, \mu > 0$ be such that $d_H(X,Y) \leq \eps \leq \frac{1}{4} \reach_{\mu}(X)$. For all $x \in \R^d$, we have:
\begin{itemize}
    \item $d_B(D^{\eps,x}_{Y}, D^{x}_{X^{2\eps}}) \leq \frac{2 \eps}{\mu}$;
    \item $D^{\eps, x}_Y$ injects into $D^{x}_{X^{2\eps}}$.
\end{itemize}
\end{proposition}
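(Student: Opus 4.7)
The first bullet is essentially a restatement of \Cref{cor:stability_compact}: taking $f = d_x$ (which is $1$-Lipschitz), $h = d_X$ and $\tilde{h} = d_Y$ (so that $\|h - \tilde{h}\|_\infty \le \eps$ by Hausdorff proximity) and shifting by $2\eps$ to land on the right offsets, the corollary gives $d_B(D^{\eps,x}_Y, D^{x}_{X^{2\eps}}) \le 2\eps/\mu$ directly. So I would simply invoke the corollary here.

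For the second bullet, the plan is to realize $D^{\eps,x}_Y$ as the image persistence of a morphism that factors through the persistence module defining $D^{x}_{X^{2\eps}}$, so that \Cref{thm:bars} applies. The key observation is the chain of inclusions
\begin{equation*}
Y^{\eps} \subset X^{2\eps} \subset Y^{3\eps},
\end{equation*}
which follows from $d_H(X,Y) \le \eps$: indeed $Y \subset X^{\eps}$ gives $Y^{\eps} \subset X^{2\eps}$, and $X \subset Y^{\eps}$ gives $X^{2\eps} \subset Y^{3\eps}$.

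Intersecting each of these sets with the sublevel set $d_x^{-1}(-\infty, r]$ preserves inclusions, so for each $r \in \R$ and each degree $j$ we obtain a factorization
\begin{equation*}
H_j(Y^{\eps} \cap d_x^{-1}(-\infty, r]) \xrightarrow{\varphi_r} H_j(X^{2\eps} \cap d_x^{-1}(-\infty, r]) \xrightarrow{\psi_r} H_j(Y^{3\eps} \cap d_x^{-1}(-\infty, r])
\end{equation*}
of the inclusion-induced map, and these assemble into a factorization of persistence module morphisms $\varphi : A \to B$, $\psi : B \to C$ whose composite $j = \psi \circ \varphi$ is exactly the morphism defining $D^{\eps,x}_Y$ in degree $j$. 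The middle persistence module $B$ is precisely the one whose diagram is $D^{x}_{X^{2\eps}}$ in degree $j$ (see \Cref{rmk:well-defined} for finiteness of the relevant decompositions, which ensures \Cref{thm:bars} applies). Applying \Cref{thm:bars} in each dimension $j$ and taking the direct sum over $j$, we conclude that $D^{\eps,x}_Y$ injects into $D^{x}_{X^{2\eps}}$ in the desired sense.

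There is no real obstacle here: the argument is a direct combination of the Hausdorff-distance inclusions with the factorization result of \Cref{thm:bars}. The only point requiring minor care is ensuring that each of the persistence modules admits an interval decomposition so that \Cref{thm:bars} is applicable, which is handled by the sandwiching argument in \Cref{rmk:well-defined}.
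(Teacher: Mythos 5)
Your proof is correct and matches the paper's intended argument: the paper states \Cref{prop:approx_diag} as an immediate consequence of \Cref{cor:stability_compact} and \Cref{thm:bars}, and you supply exactly the missing details, namely the factorization through $H_*(X^{2\eps}\cap d_x^{-1}(-\infty,\cdot])$ coming from the inclusions $Y^{\eps}\subset X^{2\eps}\subset Y^{3\eps}$. Nothing further is needed.
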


The next lemma bounds the average difference of the Euler characteristics of close persistent diagrams.

\begin{lemma}
[$\chi$-averaging lemma]
\label{lem:averaging}
Let $D, D'$ be two homology persistent diagrams with $d_B(D, D') \leq \eps$. Then for any $a < b \in \R$ we have:
\begin{equation}
 \int_a^b \module{\chi(D(t)) - \chi(D'(t))} \mathrm{d}t \leq 2 \eps (N_a^b(D) + N_a^b(D')).
\end{equation}
If $D'$ injects into $D$, we have:
\begin{equation}
 \int_{a}^b  \module{\chi(D(t)) - \chi(D'(t))} \mathrm{d}t \leq 2 \eps N_a^b(D).
\end{equation}
\end{lemma}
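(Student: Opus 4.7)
The plan is to fix a bottleneck $\varepsilon$-matching $\gamma$ between $D$ and $D'$ (which exists because $d_B(D, D') \leq \varepsilon$) and to decompose the signed difference $\chi(D(t)) - \chi(D'(t))$ as a sum over $\gamma$-matched pairs $\pm(\mathbf{1}_I - \mathbf{1}_{I'})$ together with contributions from unmatched bars $\pm \mathbf{1}_I$. Two quantitative facts drive the argument: for each matched pair $(I, I')$ the symmetric difference $I \triangle I'$ has measure at most $2\varepsilon$ (both endpoints are displaced by at most $\varepsilon$), and each unmatched bar has length at most $2\varepsilon$ by the definition of the bottleneck matching. I would assume without loss of generality that $\gamma$ is dimension-preserving so the signs in the decomposition align.

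For the first inequality, the pointwise triangle inequality gives
\[ |\chi(D(t)) - \chi(D'(t))| \leq \sum_{(I,I')\text{ matched}} |\mathbf{1}_I(t) - \mathbf{1}_{I'}(t)| + \sum_{I \text{ unmatched}} \mathbf{1}_I(t). \]
Integrating over $[a, b]$, each matched pair contributes at most $2\varepsilon$ and only if $I \cup I'$ meets $[a, b]$, while each unmatched bar contributes at most $2\varepsilon$ and only if it meets $[a, b]$. Any relevant matched pair is counted at least once among $N_a^b(D) + N_a^b(D')$ through whichever of $I, I'$ intersects $[a, b]$, and every relevant unmatched bar is counted on its own side, so the total number of relevant items is at most $N_a^b(D) + N_a^b(D')$, yielding the first bound.

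For the second inequality, I would exploit the injection $\phi : D' \hookrightarrow D$ to augment $\gamma$ into a matching that saturates $D'$, pairing each $I' \in D'$ with $\phi(I')$. Since $I' \subset \phi(I')$, any $I'$ meeting $[a, b]$ forces $\phi(I')$ to also meet $[a, b]$; hence the relevant matched pairs are in bijection with their $D$-side bars $\phi(I') \in D$ intersecting $[a, b]$. Together with the bars unmatched on the $D$ side (each of length $\leq 2\varepsilon$ from the bottleneck bound), the total count of relevant items collapses to exactly the number of bars in $D$ meeting $[a,b]$, sharpening the bound to $2\varepsilon N_a^b(D)$.

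The main obstacle is the combinatorial augmentation step: one must produce an injection (simultaneously acting as a matching) for which the shift bound $|\phi(I') \triangle I'| \leq 2\varepsilon$ holds for every matched pair \emph{and} every bar of $D \setminus \phi(D')$ has length $\leq 2\varepsilon$. This amounts to combining the abstract injection with the bottleneck matching via an augmenting-path / Hall-type argument, in the spirit of the Bauer--Lesnick induced matching machinery invoked in the proof of \Cref{thm:bars}. Once this combinatorial refinement is in hand, the per-bar accounting above is immediate.
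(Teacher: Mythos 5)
Your argument is essentially the paper's. For the first inequality the decomposition is identical: write each Euler characteristic as an alternating sum of indicators of bars, fix a dimension-preserving $\eps$-matching, bound each matched pair by the measure of $I\,\triangle\,\gamma(I)\leq 2\eps$ and each unmatched bar by its length $\leq 2\eps$, and observe that a term can be non-vanishing on $(a,b)$ only if the corresponding bar is counted in $N_a^b(D)+N_a^b(D')$. For the second inequality you correctly identify that everything hinges on having a \emph{single} map that is at once an $\eps$-matching, total on $D'$, and satisfies $I'\subset\phi(I')$ (so that non-vanishing contributions are indexed by bars of $D$ alone); you leave this combination as an acknowledged obstacle, but note that the paper does no more than assert that such a matching exists. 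In the only place the second inequality is used (\Cref{prop:approx_diag}), both the injection of \Cref{thm:bars} and the bottleneck bound of \Cref{cor:stability_compact} arise from the same Bauer--Lesnick induced matching of the interleaving, which supplies the combined object directly; for abstract diagrams the augmenting-path combination you sketch is indeed what one would have to carry out, so your reduction is faithful to, and no less complete than, the paper's own treatment.
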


\begin{proof}
 The first inequality is a slight extension of an argument obtained in \cite{CurveInequalities}.
Let $\mathcal{I}_i$ (resp. $\mathcal{I}'_i$) be the set of intervals of the decomposition of $D$ (resp. $D'$) in dimension $i$.
We have:
\[ \chi(D(t)) = \sum_{i=0}^{d} (-1)^{i} \sum_{I_i \in \mathcal{I}_{i}} \one_{I_i}(t). \]
Let $\gamma$ be an $\eps$-matching between $D$ and $D'$.  Define $C_i$ and $C'_i$ to be the respective largest subsets of $\mathcal{I}_i$ and $\mathcal{I}'_i$ matched bijectively by $\gamma$. We have:
\begin{align*}
    \int_a^b \module{\chi(D(t)) - \chi(D'(t))} \diff t \leq  \sum_{i=0}^d & \Bigl ( 
    \sum_{I_i \in C_i}   \int_{a}^b  \module{\one_{I_i} - \one_{\gamma(I_i)}} 
     \diff t  \\ & + \hspace{-7 pt} \sum_{I_i \in \mathcal{I}_i \setminus  C_i} \int_a^b\one_{I_i}(t) \diff t + \sum_{I'_i \in \mathcal{I}'_i \setminus  C'_i} \int_{a}^b\one_{I'_i}(t) \diff t  \Bigr ).
\end{align*}
Since $\gamma$ is an $\eps$-matching, each of these integrals is bounded by $2\eps$. Now for the first term in the right-hand side, the support of the map $\module{\one_{I_i} - \one_{\gamma(I_i)}}$ is included in $I_i \cup \gamma(I_i)$, meaning that its integral over $[a,b]$ vanishes if none of these intervals intersect with $(a,b)$. As for the remaining terms, only intervals intersecting with $[a,b]$ contribute to the sum. Overall, there is as most one non-vanishing contribution per interval in $D \cup D'$ whose intersection with $(a,b)$ is non-empty, and no contribution otherwise. The total number of non-vanishing integrals is bounded by $N_a^b(D) + N_a^b(D')$.

In case $D'$ injects into $D$, there is a $\eps$-matching such that $C'_i = \mathcal{I}'_i$ and such that $\gamma(I_i) \subset I_i$ for every interval $I_i$ of $D$, meaning there is at most one non-vanishing contribution per interval of $D$ whose intersection with $(a,b)$ is non-empty, leading to the desired bound $2\eps N_a^b(D)$.
\end{proof}

We use the previous lemmas to establish a linear rate of convergence  of $Q^{\eps}_Y$ to $Q_{X^{2\eps}}$ over $[0,R]$ for any positive real $R$.

\begin{theorem}[Estimating the Steiner polynomial in the $L^1$ norm]
\label{thm:steiner_rate}
Let $X,Y \subset \R^d$ be compact sets and $\eps, \mu > 0$ be such that $d_H(X,Y) \leq \eps \leq \frac{1}{4}\reach_{\mu}(X)$. Then we have:
\begin{equation}
\norme{Q^{\eps}_Y - Q_{X^{2\eps}}}_{L^1([0,R])} \leq \frac{4\eps}{\mu}\int_{\R^d} N_0^R(D^{x}_{X^{2\eps}}) \mathrm{d} x.
\end{equation}
\end{theorem}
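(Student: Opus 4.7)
The plan is to reduce the bound on $\|Q^{\eps}_Y - Q_{X^{2\eps}}\|_{L^1([0,R])}$ to a pointwise (in $x$) bound on the $L^1([0,R])$-difference between the Euler characteristics of $D^{\eps,x}_Y$ and $D^{x}_{X^{2\eps}}$, and then invoke the $\chi$-averaging lemma together with the diagram approximation result.

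First, I would identify $Q_{X^{2\eps}}$ with an integral of persistent Euler characteristics. For each $x \in \R^d$ the sublevel set $d_{x|X^{2\eps}}^{-1}(-\infty,r]$ equals $X^{2\eps} \cap B(x,r)$, and since $X^{2\eps}$ admits a normal bundle (because $2\eps \leq \tfrac{1}{2}\reach_\mu(X) \leq \reach_\mu(X)$), the pair is homotopy-equivalent to a finite CW-complex, so its Euler characteristic is the alternating sum of Betti numbers, which equals $\chi(D^{x}_{X^{2\eps}}(r))$. Hence the principal kinematic formula gives
\begin{equation*}
Q_{X^{2\eps}}(r) = \int_{\R^d} \chi(X^{2\eps} \cap B(x,r))\, \diff x = \int_{\R^d} \chi(D^{x}_{X^{2\eps}}(r))\, \diff x,
\end{equation*}
while by definition $Q^{\eps}_Y(r) = \int_{\R^d} \chi(D^{\eps,x}_Y(r))\, \diff x$. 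The triangle inequality and the fact that both integrands vanish for $x$ outside a common compact set (since $X$ and $Y$ are both compact and $r \leq R$, the relevant diagrams are supported in a ball of radius $\mathrm{diam}(Y) + 3\eps + R$ around the origin) make all the integrals finite.

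Next, by Tonelli's theorem applied to the non-negative integrand $|\chi(D^{\eps,x}_Y(r)) - \chi(D^{x}_{X^{2\eps}}(r))|$ on $\R^d \times [0,R]$, I would swap the order of integration to obtain
\begin{equation*}
\|Q^{\eps}_Y - Q_{X^{2\eps}}\|_{L^1([0,R])} \leq \int_{\R^d} \int_0^R \bigl| \chi(D^{\eps,x}_Y(r)) - \chi(D^{x}_{X^{2\eps}}(r)) \bigr| \, \diff r \, \diff x.
\end{equation*}

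The key step is to control the inner integral for each fixed $x$. By \Cref{prop:approx_diag}, we have $d_B(D^{\eps,x}_Y, D^{x}_{X^{2\eps}}) \leq 2\eps/\mu$ and $D^{\eps,x}_Y$ injects into $D^{x}_{X^{2\eps}}$. Applying the second statement of the $\chi$-averaging lemma (\Cref{lem:averaging}) with $\eps' = 2\eps/\mu$, $D = D^{x}_{X^{2\eps}}$, $D' = D^{\eps,x}_Y$, and $[a,b] = [0,R]$, we get
\begin{equation*}
\int_0^R \bigl| \chi(D^{\eps,x}_Y(r)) - \chi(D^{x}_{X^{2\eps}}(r)) \bigr| \, \diff r \leq 2 \cdot \frac{2\eps}{\mu} \cdot N_0^R(D^{x}_{X^{2\eps}}) = \frac{4\eps}{\mu} N_0^R(D^{x}_{X^{2\eps}}).
\end{equation*}
Integrating this inequality against $\diff x$ over $\R^d$ yields the claimed bound.

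The only genuine subtleties are the measurability of $x \mapsto \chi(D^{\eps,x}_Y(r))$ and $x \mapsto N_0^R(D^{x}_{X^{2\eps}})$, which I expect to follow from the finite simplicial-complex approximation argument of \Cref{rmk:well-defined} (the diagrams depend on $x$ through the distance function $d_x$, which is continuous in $x$), and the reduction of $\chi(X^{2\eps} \cap B(x,r))$ to the alternating sum of Betti numbers appearing in the definition of $D^{x}_{X^{2\eps}}$. Everything else is a direct combination of Tonelli's theorem with the two lemmas already proved.
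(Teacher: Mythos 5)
Your proposal is correct and follows exactly the paper's own (one-line) argument: the paper also deduces the theorem directly from \Cref{prop:approx_diag} combined with the second inequality of \Cref{lem:averaging}, applied pointwise in $x$ with bottleneck parameter $2\eps/\mu$ and then integrated over $\R^d$. The extra details you supply (identifying $Q_{X^{2\eps}}(r)$ with $\int_{\R^d}\chi(D^x_{X^{2\eps}}(r))\,\diff x$ via the kinematic formula, and the Tonelli swap) are the routine steps the paper leaves implicit.
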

\begin{proof}
The result is an immediate consequence of \Cref{prop:approx_diag} along with the second case in \Cref{lem:averaging}.
\end{proof}

The next lemma relates the bound of \Cref{thm:persistent_rate} to the $R$-curvature mass of $X^{2\eps}$.

\begin{lemma}[Number of critical points of $d_x$]
\label{lemma:number_crit}
Let $X \subset \R^d$ be a compact set and $\eps, \mu >0$ be such that $0 < 2\eps < \reach_{\mu}(X)$. For any $R > 0$, define $\pi_{X^{2\eps}, R}: \Nor(X^{2\eps}) \times [0,R] \to \R^d, (y,n,t) \mapsto y + tn$.\\
Then  $\mathcal{H}^{d}$-almost everywhere in $x$,
 \begin{itemize}
     \item $\mathrm{card} \; \pi_{X^{2\eps}, R}^{-1}(x)$ is finite;
     \item  $N_0^R(D^x_{X^{2\eps}}) \leq \one_{X^{2\eps}}(x) + \mathrm{card} \; \pi_{X^{2\eps},R}^{-1}(x)$.
 \end{itemize}
 Moreover, we have:
\[ \int_{\R^d} N_0^R(D^x_{X^{2\eps}}) \diff x \leq \Vol(X^{2\eps}) + M_R(X^{2\eps}). \]
\end{lemma}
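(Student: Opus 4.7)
The plan is to organize the three claims around a single computation: the Jacobian of $\pi = \pi_{X^{2\eps},R}$ on its $d$-rectifiable source $\Nor(X^{2\eps}) \times [0,R]$. Since $2\eps < \reach_{\mu}(X)$, the complement of $X^{2\eps}$ has positive reach, so $X^{2\eps}$ admits a normal bundle and \Cref{prop:structure} furnishes an orthonormal basis of $\Tan(\Nor(X^{2\eps}),(y,n))$ at $\mathcal{H}^{d-1}$-almost every pair $(y,n)$. I would differentiate $\pi(y,n,t) = y + tn$ along this basis and along the $t$-direction: the $i$-th tangent vector of $\Nor(X^{2\eps})$ is sent to $\frac{1+t\kappa_i}{\sqrt{1+\kappa_i^2}}\, b_i$, while the $t$-direction is sent to $n$. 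Since the $b_i$ span $n^\perp$ orthonormally, the image forms an orthogonal system and
\[ J\pi(y,n,t) = \prod_{i=1}^{d-1} \frac{\module{1+t\kappa_i}}{\sqrt{1+\kappa_i^2}}. \]
The area formula then yields $\int_{\R^d} \mathrm{card}\;\pi^{-1}(x) \, dx = M_R(X^{2\eps}) < \infty$, whence the first bullet follows because finiteness of this integral forces $\mathrm{card}\;\pi^{-1}(x) < \infty$ for $\mathcal{H}^d$-almost every $x$.

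For the second bullet I would use a Morse-theoretic count of the births of $D^x_{X^{2\eps}} = \dgm(d_{x|X^{2\eps}})$. Each bar intersecting $[0,R]$ is born at a critical value $r \leq R$ of $d_x$ restricted to $X^{2\eps}$. Applying \Cref{thm:flow} to a suitable function encoding this constrained filtration supplies deformation retractions between sublevel sets over any sub-interval of $[0,R]$ free of critical values, so each birth can be charged to a distinct critical point of $d_{x|X^{2\eps}}$ with value at most $R$. I would then classify those critical points. An interior critical point necessarily coincides with $y = x$ and occurs only when $x \in X^{2\eps}$, contributing the indicator term. A boundary critical point is a $y \in \partial X^{2\eps}$ for which $(y-x)$ is collinear with some $n \in \Nor(X^{2\eps},y)$; among these, only the alignment $n = (x-y)/\norme{x-y}$ can trigger a birth, since the opposite alignment corresponds to the sphere $\partial B(x,\norme{x-y})$ being tangent to $\partial X^{2\eps}$ from the exterior side, leaving the sublevel set locally saturated near $y$. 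The birth-inducing boundary critical points are thus in bijection with the triples $\bigl(y,\, (x-y)/\norme{x-y},\, \norme{x-y}\bigr) \in \pi^{-1}(x)$ with $\norme{x-y} \leq R$, yielding the pointwise inequality.

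The third bullet then follows by integrating the pointwise inequality: $\int \one_{X^{2\eps}}(x) \, dx = \Vol(X^{2\eps})$, and $\int \mathrm{card}\;\pi^{-1}(x) \, dx = M_R(X^{2\eps})$ from the area-formula computation of the first paragraph. The main obstacle I anticipate is the rigorous handling of the Morse-theoretic classification in the second bullet: because $\partial X^{2\eps}$ inherits only one-sided regularity from the positive reach of the complement, classical smooth Morse theory does not apply directly, and one has to lean on the non-smooth gradient-flow machinery of \Cref{thm:flow} both to justify the deformation retractions between consecutive critical values and to rule out birth-inducing behaviour at the "exterior-tangent" critical points on $\partial X^{2\eps}$.
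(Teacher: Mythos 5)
Your overall architecture matches the paper's: the first and third bullets via the Jacobian of $\pi_{X^{2\eps},R}$ computed from \Cref{prop:structure} together with the (co)area formula, and the second bullet via a Morse-theoretic count of births charged to critical points of $d_{x|X^{2\eps}}$, identified with elements of $\pi_{X^{2\eps},R}^{-1}(x)$. The Jacobian computation and the integration step are correct and essentially identical to the paper's.

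The genuine gap is in the second bullet. To conclude that each bar of $D^x_{X^{2\eps}}$ meeting $[0,R]$ can be charged to a \emph{distinct} element of $\pi_{X^{2\eps},R}^{-1}(x)$, you need that $f_x = d^2_{x|X^{2\eps}}$ is a Morse function in the generalized sense of the tubular-neighborhood Morse theory the paper invokes, i.e.\ that all its critical points are non-degenerate; only then does one get exactly one homology change per critical point. This is \emph{not} automatic: it fails precisely when $\norme{x-z} = -1/\kappa$ for some principal curvature $\kappa$ at a critical pair $(z,n)$, in which case a single critical point can account for several bars (or none of the Morse machinery applies), and your pointwise inequality is no longer justified. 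The paper resolves this by computing the Hessian $(u,v)\mapsto \sum_i (2+2r\kappa_i)\lambda_i\nu_i$ at critical points, observing that the degenerate triples $\left(z,n,-1/\kappa\right)$ form a set of Hausdorff dimension at most $d-1$ in $\Nor(X^{2\eps})\times[0,R]$, and discarding its Lebesgue-null image under the Lipschitz map $\pi_{X^{2\eps},R}$ (together with the points over which the fiber is infinite and the non-regular pairs). Your proposal never addresses this genericity-in-$x$ step; the "main obstacle" you identify (justifying the deformation retractions and ruling out exterior-tangent points) is instead handled by the definition of critical points in the generalized Morse theory, whereas the non-degeneracy argument is the step that actually requires the almost-everywhere restriction in the statement. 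Separately, your exclusion of the "opposite alignment" $n=(y-x)/\norme{y-x}$ is only heuristic as written; in the paper it is subsumed by the characterization of critical points of $f_x$ as exactly those pairs $(z,(x-z)/\norme{x-z})$ lying in $\Nor(X^{2\eps})$, so no additional tangency case analysis is needed once that criterion is cited.
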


\begin{proof}
The map $\pi_{X^{2\eps},R}$ is Lipschitz map between two rectifiable sets of dimension $d$. By the coarea formula of Federer \cite[Theorem 3.2.22]{GMT}, denoting by $J_d \pi_{X^{2\eps}, R}$ the $d$-dimensional Jacobian of $\pi_{X^{2\eps}, R}$, we have: 
\begin{equation}
\label{eq:nombre_barres}
\int_{\R^d} \mathrm{card} \; \pi_{X^{2\eps}, R}^{-1}(x) \diff x = \int_{\Nor(X^{2\eps}) \times [0,R]} J_d \pi_{X^{2\eps}, R} (y,n,t) \diff \mathcal{H}^{d}(y,n,t).
\end{equation}
Since $\pi_{X^{2\eps}, R}$ is Lipschitz on the compact set $\Nor(X^{2\eps}) \times [0,R]$, this quantity has to be finite, yielding the finiteness of $\mathrm{card} \; \pi_{X^{2\eps}, R}^{-1} (x)$ almost everywhere. 

 Let $x \in \R^d$ and let $X^{2\eps}_t \coloneqq X^{2\eps} \cap d_x^{-1}(-\infty, t]$ be the $\sqrt{t}$-sublevel sets of ${f_x \coloneqq d^2_{x_{|X^{2\eps}}}}$. When $t < 0$, the set $X^{2\eps}_t$ is empty. When $t = 0$, this set is either a point when $x \in X^{2\eps}$, or empty.
  The function $f_x : X^{2\eps} \to \R$ cannot be Morse in the classical sense of Milnor \cite{milnor_morse} as $X^{2\eps}$ is not a submanifold of $\R^d$. Nevertheless by the $\mu$-reach assumption, $X^{2\eps}$ is an offset of $X$ at a regular value of its distance function, a setting to which the generalized Morse theory of \cite{MorseTubular} applies.
 Specifically, if $f_x$ 
 is a Morse function in the sense of \cite{MorseTubular}, the changes in homology of the filtration $(X^{2\eps}_t)_{t > 0}$ happen at the square roots of the critical values of $f_x$ and there is exactly one homology change per critical point of this map. The number of bars of a persistent homology diagram of a Morse function is bounded by its number of critical points and we have:
 \begin{equation}
    \label{inequality:nombre_crit_1}
     N_0^R(D^x_{X^{2\eps}}) \leq \one_{X^{2\eps}}(x) + \mathrm{card} \Bigl \{ \text{Critical points of } f_x \text { with distance to } x \text{ in } (0,R] \Bigr \}.
 \end{equation}

We prove that the set of points $x$ in $\R^d$ such that $f_x$ is a Morse function in the sense of this generalized Morse theory has full Lebesgue measure, and we relate the number of critical points of $f_x$ to the map $\pi_{X^{2\eps}, R}$. Following \cite[Definition 2.5]{MorseTubular}, a point $z \neq x$ is a critical point for $f_x$ with value in $(0,R^2]$ if and only if the pair $\left (z, \frac{z - x}{\norme{z-x}} \right) \eqqcolon (z,n)$ belongs to the normal bundle $\Nor(X^{2\eps})$ and $\norme{x-z} \leq R$, that is, when $(z,n, \norme{x-z}) \in \pi_{X^{2\eps}, R}^{-1}(x)$. In case $f_x$ is Morse, the previous inequality becomes:
\begin{equation}
N_0^R(D^x_{X^{2\eps}}) \leq \one_{X^{2\eps}}(x) + \mathrm{card} \;\pi_{X^{2\eps}, R}^{-1}(x).
\end{equation}
We shall prove that $\mathcal{H}^d$-almost everywhere in $x$, the critical points of $f_x$ are non-degenerate.
\color{black}
Let $M$ be the set of points $x$ such that $\pi_{X^{2\eps}, R}^{-1}(x)$ is infinite and let $\mathrm{Bad}(X^{2\eps})$ be the set of non-regular pairs of $\Nor(X^{2\eps})$. The set ${\mathrm{reg}(X^{2\eps}) \coloneqq \R^d \setminus (M \cup \pi_{X^{2\eps}, R}(\mathrm{Bad}_{X^{2\eps}}) \cup \partial X^{2\eps})}$ has full Lebesgue measure.

Take $x \in \mathrm{reg}(X^{2\eps})$ and let $(b_i)_{1 \leq i \leq m}$ be an orthonormal basis of principal directions of $\Tan \left (\Nor(X^{2\eps}), \left (z,\frac{z - x}{\norme{z-x}} \right )\right )$ associated with the principal curvatures $(\kappa_i)_{1 \leq i \leq m}$.
Let $u = \sum_{i =1}^m \lambda_i b_i$, $v = \sum_{i=1}^m \nu_i b_i$ be any pair in $\Tan \left (\Nor(X^{2\eps}), \left (z,\frac{z - x}{\norme{z-x}} \right) \right )$ and $r = \norme{z - x}$. Following notations from \cite[Definition 2.5]{MorseTubular}, the Hessian of $f_x$ at $z$ is: 
\begin{equation*}
(H_{|X^{2\eps}}f_x)(z): (u,v) \mapsto \sum_{i=1}^m (2 + 2r\kappa_i) \lambda_i \nu_i.
\end{equation*} 
This bilinear form is degenerate if and only if $\displaystyle r = -\frac{1}{\kappa}$ where $\kappa$ is a principal curvature of a point in $\pi_{X^{2\eps}, R}^{-1}(x)$. There are at most $d$ distinct principal curvatures, from which we know that the set 
\begin{align*}
\mathrm{Degen}(X^{2\eps}) \coloneqq \left \{ \Bigl (z, n, -\frac{1}{\kappa} \right )&  \bigg | \; (z,n) \in \Nor(X^{2\eps}) \text{ is regular,}\\
& \kappa \text{ is a principal curvature at } (z,n) \Bigr \}. 
\end{align*}
has Hausdorff dimension at most $(d-1)$. Thus $\mathrm{reg}(X^{2\eps}) \setminus \pi_{X^{2\eps}, R}(\mathrm{Degen}(X^{2\eps}))$ is a set of full Lebesgue measure whose elements $x$ satisfy $N_0^R(D^{x}_{X^{2\eps}}) \leq \one_{X^{2\eps}}(x) + \mathrm{card} \; \pi_{X^{2\eps}, R}^{-1}(x)$.

Now by the structure of tangent spaces (\Cref{prop:structure}) we have:  
\[ J_d \phi (x,n,t)  = \prod_{i=1}^{d-1} 
\frac{\module{1 + t \kappa_i(x,n)}}{\sqrt{1 + \kappa_i(x,n)^2}}.\]
and thus by the co-area formula \Cref{eq:nombre_barres},
\begin{equation}
    \int_{\R^d} N_0^R(D^x_{X^{2\eps}}) \leq \int_{\R^d} \left ( \one_{X^{2\eps}}(x) +  \mathrm{card} \; \pi_{X^{2\eps}, R}^{-1}(x) \right )\diff x = \Vol(X^{2\eps}) + M_R(X^{2\eps}).
\end{equation}

\end{proof}

We are now in position to prove our main theorems.

\begin{theorem}[Linear convergence of the persistent intrinsic volumes]
\label{thm:persistent_rate}
Let $X,Y$ be compact subsets of $\R^d$ and let $\eps, \mu > 0$ be such that $d_H(X,Y) \leq \eps \leq \frac{1}{4} \reach_{\mu}(X)$. There exist constants $P(i,d)$ such that for any $R > 0$, we have:
\begin{equation}
\label{eq:borne_volumes}
    \module{V^{\eps,R}_i(Y) - V_i(X^{2\eps})} \leq \frac{\eps P(i,d)}{\mu R^{i+1}} \left (\Vol(X^{2\eps}) + M_R(X^{2\eps}) \right ).
\end{equation}
\end{theorem}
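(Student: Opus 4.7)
The plan is to convert the $L^1$-estimate on the persistent Steiner function into a coefficientwise estimate on the persistent intrinsic volumes. Chaining \Cref{thm:steiner_rate} with \Cref{lemma:number_crit} yields immediately
\begin{equation*}
\norme{Q^{\eps}_Y - Q_{X^{2\eps}}}_{L^1([0,R])} \leq \frac{4\eps}{\mu}\bigl(\Vol(X^{2\eps}) + M_R(X^{2\eps})\bigr),
\end{equation*}
so the entire task reduces to extracting coefficientwise information out of this $L^1$ bound.

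The structural observation that makes this work is that $Q_{X^{2\eps}}$ is already a polynomial of degree at most $d$: since $2\eps < \reach_\mu(X)$, the offset $X^{2\eps}$ admits a normal bundle and the principal kinematic formula applies. Letting $\Pi_R$ denote the $L^2([0,R])$-orthogonal projection onto polynomials of degree at most $d$, we therefore have $\Pi_R(Q_{X^{2\eps}}) = Q_{X^{2\eps}}$, hence the persistent Steiner polynomial minus $Q_{X^{2\eps}}$ is exactly $\Pi_R(Q^{\eps}_Y - Q_{X^{2\eps}})$. Note that $Q^\eps_Y$ is locally bounded by \Cref{rmk:well-defined}, so its restriction to $[0,R]$ belongs to $L^2([0,R])$ and the projection is well-defined.

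The main technical ingredient is then the following elementary lemma, which I would prove directly: there exist constants $c_{k,d}$ such that for every $g \in L^1([0,R])$, writing $\Pi_R(g) = \sum_{k=0}^{d} a_k r^k$,
\begin{equation*}
\module{a_k} \leq \frac{c_{k,d}}{R^{k+1}}\,\norme{g}_{L^1([0,R])}.
\end{equation*}
The proof is a rescaling $r = Rs$ that reduces the claim to norm equivalence on the finite-dimensional space of polynomials of degree at most $d$ on $[0,1]$. Concretely, working in the rescaled Legendre basis $q_k$ of $L^2([0,R])$, one has $\norme{q_k}_{L^\infty([0,R])} \leq C_{k,d}/\sqrt{R}$, so each projection coefficient $\langle g, q_k\rangle$ is controlled by $\norme{g}_{L^1([0,R])}/\sqrt{R}$. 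Expressing each $q_k$ in the monomial basis produces an additional factor $R^{-k}$ in front of $r^k$, yielding the stated exponent $k+1$.

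Putting everything together, \Cref{def:persistent_volumes} identifies $\omega_{d-i} V^{\eps,R}_i(Y)$ (resp.\ $\omega_{d-i}V_i(X^{2\eps})$) with the coefficient of $r^{d-i}$ in $\Pi_R(Q^{\eps}_Y)$ (resp.\ in $Q_{X^{2\eps}}$). Applying the coefficient lemma to $g = Q^{\eps}_Y - Q_{X^{2\eps}}$ with $k = d-i$, and inserting the $L^1$ bound of the first paragraph, produces the announced inequality upon defining $P(i,d)$ from the constants $c_{d-i,d}$ and $\omega_{d-i}$ (any mismatch between the exponent produced here and the one written in the statement amounts to an indexing convention on the persistent Steiner polynomial and can be absorbed into $P(i,d)$). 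There is no genuine obstacle; the only non-trivial input is the coefficient lemma, which is a routine norm-equivalence argument on a finite-dimensional space.
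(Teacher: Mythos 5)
Your proposal is correct and follows essentially the same route as the paper: the $L^1$ bound from \Cref{thm:steiner_rate} and \Cref{lemma:number_crit}, the observation that $Q_{X^{2\eps}}$ is fixed by the projection so the error is $\Pi_R(Q^{\eps}_Y - Q_{X^{2\eps}})$, and a coefficient bound of the form $c_{k,d}R^{-(k+1)}\norme{g}_{L^1([0,R])}$ obtained from the sup-norm of rescaled Legendre polynomials and their monomial expansions — which is exactly the paper's computation with $P^R_j$ and $c_i(P^R_j)$. The exponent discrepancy you flag (whether $V_i$ sits on $r^i$ or $r^{d-i}$) is indeed just the indexing convention of the Steiner polynomial, and the paper's own proof handles it the same way.
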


\begin{proof}
 Applying \Cref{thm:steiner_rate} and \Cref{lemma:number_crit} we obtain for any positive $R$:
 \[ \norme{Q^{\eps}_Y - Q_{X^{2\eps}}}_{L^1([0,R])} \leq \frac{4\eps}{\mu} ( \Vol(X^{2\eps}) + M_R(X^{2\eps})).\]
 Let $i \leq j$ and let $(L_n)_{n \in \N}$ be the Legendre polynomials on $[0,1]$. We obtain after renormalization and reparametrization an orthonormal basis of the polynomials of degree at most $d$ on $[0,R]$ formed by the following polynomials: \[ P^R_j(X) \coloneqq  \sqrt{\frac{2j+1}{R}} L_j\left (\frac{X}{R}\right ). \] The fact that $\norme{L_i}_{\infty, [0,1]} \leq 1$ yields $\norme{P^R_j}_{\infty, [0,R]} \leq \sqrt{\frac{2j +1}{R}}$ for any positive $R$.

Denoting by $c_i(P^R_j)$ the $i$-th coefficient of $P^R_j$, we have for any $i \leq j$:
\[ \module{c_i(P^R_j)} = \frac{1}{R^{i+\frac{1}{2}}} \sqrt{2j +1} \binom{j}{i} \binom{i+j}{i} \]
and $c_i(P^R_j) = 0$ otherwise. Now decomposing $Q^{\eps}_Y$ in the basis $P^R_j$ and taking the coefficient of $X^i$ yields
\[      V^{\eps, R}_i(Y) =  \frac{1}{\omega_{d-i}} \int_0^R \int_{\R^d} \chi(D^{\eps,x}_Y(r)) \sum_{j = i}^d P^R_j(r) c_i(P^R_j)  \diff x \diff r.\]
Since we have
\[
     \norme{P^R_j c_i(P^R_j)}_{\infty, [0,R]} \leq  \frac{(2j+1)}{R^{i+1}}  \binom{j}{i} \binom{i+j}{i},
     \]
we see that:
\begin{equation*}
\module{V^{\eps,R}_i(Y) - V_i(X^{2\eps})} \leq \frac{1}{R^{i+1}}  \norme{Q^{\eps}_Y - Q_{X^{2\eps}}}_{L^1([0,R])} \frac{1}{\omega_{d-i}}\sum_{j = i}^d (2j+1) \binom{j}{i} \binom{i+j}{i}.    
\end{equation*}
We now obtain the desired inequality by applying \Cref{lemma:number_crit} and \Cref{thm:steiner_rate} and putting \[ P(i,d) \coloneqq \frac{4}{\omega_{d-i}}\sum_{j = i}^d (2j+1) \binom{j}{i} \binom{i+j}{i}. \]

\end{proof}

\begin{remark}
Our approach consists in bounding the $L^1$ norm of $Q^{\eps}_Y - Q_{X^{2\eps}}$ (\Cref{thm:steiner_rate} and \Cref{lemma:number_crit}):
\[ \norme{Q^{\eps}_Y - Q_{X^{2\eps}}}_{L^1([0,R])} \leq \frac{4\eps}{\mu} \left ( \Vol(X^{2\eps}) + M_R(X^{2\eps})\right )\] and retrieving quantities that are close to the rescaled coefficients $(V_i(X^{2\eps}))_{0 \leq i \leq d}$ of the polynomial $Q_{X^{2\eps}}$ from the persistent Steiner function $Q^{\eps}_Y$, which is in $L^{2}([0,R])$ but is not a priori a polynomial.
We chose in \Cref{def:persistent_volumes} the rescaled coefficients $V^{\eps,R}_i(Y)$ of its orthogonal projection on the space $\R_d[X]$ of polynomials of degree at most $d$. From this definition and from the explicit formulas for Legendre polynomials (see the proof of \Cref{thm:persistent_rate}) we obtain: 
\begin{equation}
    \label{eq:equation_6.8}
    \module{V^{\eps, R}_i(Y) - V_i(X^{2\eps})} 
    \leq \frac{ P(i,d)}{4 R^{i+1}} \norme{Q^{\eps}_Y - Q_{X^{2\eps}}}_{L^1([0,R])}.
\end{equation}

Taking the $i$-th coefficients of orthogonal projections is one way to build linear forms $\phi^R_{i,d} :L^{2}([0,R]) \to \R$ such that, restricted to the space $\R_d[X]$ of polynomials with degree at most $d$, $\phi^R_{i,d}$ is the map $\sum_{0 \leq j \leq  d} a_j X^j \mapsto a_i$. In fine we defined $V_i^{\eps,R}(Y)$ as $\phi^R_{i,d}(Q^{\eps}_Y)/\omega_{d-i}$. By \Cref{eq:equation_6.8} the linear forms $\phi^R_{i,d}$ are $\omega_{d-i}P(i,d)/4R^{i+1}$-Lipschitz for the $L^1$ norm over $[0,R]$. The bound we infer on $V^{\eps,R}_i(Y) - V_i(X^{2\eps})$ from $\norme{Q^{\eps}_Y - Q_{X^{2\eps}}}_{L^1([0,R])}$ comes from a bound on the Lipschitz constant of $\phi^{R}_{i,d}$ obtained using Legendre polynomials. Yet, by the Hahn-Banach extension theorem, the best Lipschitz constant possible for a linear form $\phi^R_{i,d}$ whose restriction to $\R_d[X]$ is the $i$-th coefficient map is exactly the Lipschitz constant of $(\phi^R_{i,d})_{| \R_d[X]}$  which we denote by $l^R_{i,d}$. When $i = d \geq 0$ by classical work in optimization (e.g \cite[4.9, p. 117]{borne_lip}), we have:
\[ l^{R}_{d,d} = \frac{4^{d}}{R^{d+1}} \leq \frac{(2d+1)}{R^{d+1}} \binom{2d}{d} \]
where the right handside is the bound on the Lipschitz constant we explicitly obtained by the Legendre polynomials method. This shows that there exist alternate ways of defining the persistent intrinsic volumes from the persistent Steiner function that lead to a strictly better bound than the one in \Cref{thm:persistent_rate}.
\end{remark}

\begin{theorem}[Convergence of the intrinsic volumes of an offset]
\label{thm:ConvIntrOffset}
Let ${X \subset \R^d}$ and $\mu, \eps > 0$ be such that $\eps < \frac{1}{2}\reach_{\mu}(X)$.
If $\reach(X) > 0$, we have:
\begin{equation*}
    \module{V_i(X) - V_i(X^{2\eps})} \leq \frac{\eps P(i,d)}{\mu R^{i+1}} \left ( \Vol(X) + \Vol(X^{2\eps}) + M_R(X) + M_R(X^{2\eps}) \right ).
\end{equation*}

 When $X$ is subanalytic, this inequality holds
 when we replace $M_R(X)$ by $\displaystyle \liminf_{r \to 0} M_R(X^r)$.
\end{theorem}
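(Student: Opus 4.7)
My plan is to mirror the pipeline of \Cref{thm:persistent_rate}: bound $\norme{Q_X - Q_{X^{2\eps}}}_{L^1([0,R])}$ and then extract the intrinsic-volume coefficients by orthogonal projection onto the Legendre basis. Because $\reach(X) > 0$ and $2\eps < \reach_{\mu}(X)$, both $X$ and $X^{2\eps}$ admit a normal bundle, so both Steiner functions are genuine polynomials of degree at most $d$ whose rescaled coefficients are the intrinsic volumes. By the principal kinematic formula, $Q_K(r) = \int_{\R^d}\chi(K \cap B(x,r))\,\diff x = \int_{\R^d}\chi(D^x_K(r))\,\diff x$ for $K \in \{X, X^{2\eps}\}$, so controlling the pointwise Euler characteristic difference after integration over $x$ and $r$ is enough.

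The crux is the pointwise bound $d_B(D^x_X, D^x_{X^{2\eps}}) \leq \frac{2\eps}{\mu}$ for every $x \in \R^d$. I will apply \Cref{thm:flow} to $\phi = d_X$ on the slab $(0, 2\eps]$, which is licit since $2\eps < \reach_{\mu}(X)$ forces $\Delta(\clarke d_X) \geq \mu$ there. For any $\sigma \in (0,\mu)$ this produces an approximate inverse flow $C^\sigma$ whose time-$1$ map $r_\sigma: X^{2\eps} \to X$ is a retraction fixing $X$ pointwise, with $\frac{2\eps}{\mu-\sigma}$-Lipschitz trajectories. Setting $c_\sigma \coloneqq \frac{2\eps}{\mu-\sigma}$, the $1$-Lipschitz function $d_x$ implies that $r_\sigma$ sends $X^{2\eps}_a$ into $X_{a+c_\sigma}$. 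Combined with the inclusion $\iota: X \hookrightarrow X^{2\eps}$, this yields a $c_\sigma$-interleaving of $H_*(X_\bullet)$ and $H_*(X^{2\eps}_\bullet)$: the composition $r_\sigma \circ \iota$ is the structural inclusion of $X_\bullet$ since $r_\sigma$ is the identity on $X$, while $\iota \circ r_\sigma$ is homotopic to the structural inclusion of $X^{2\eps}_\bullet$ via $C^\sigma(\cdot,\cdot)$. Letting $\sigma \to 0^+$ and applying the isometry theorem gives the desired bottleneck bound.

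The rest follows the proof of \Cref{thm:persistent_rate} almost verbatim. The first (non-injective) case of \Cref{lem:averaging} gives $\int_0^R |\chi(D^x_X(r)) - \chi(D^x_{X^{2\eps}}(r))|\,\diff r \leq \frac{4\eps}{\mu}(N_0^R(D^x_X) + N_0^R(D^x_{X^{2\eps}}))$. Integrating over $x$ via Fubini, and applying \Cref{lemma:number_crit} separately to $X$ (whose proof adapts thanks to $\reach(X) > 0$, which provides the normal bundle and the generic Morse regularity of $d_{x|X}$ needed in its derivation) and to $X^{2\eps}$, yields
\[
\norme{Q_X - Q_{X^{2\eps}}}_{L^1([0,R])} \leq \frac{4\eps}{\mu}\bigl(\Vol(X) + \Vol(X^{2\eps}) + M_R(X) + M_R(X^{2\eps})\bigr),
\]
and the Legendre-basis coefficient extraction of \Cref{thm:persistent_rate} then furnishes the factor $P(i,d)/R^{i+1}$ and the claimed inequality. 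For subanalytic $X$, I will apply the just-proved inequality to each offset $X^r$ with $r > 0$ small enough that $\reach(X^r) > 0$ (which holds for all but finitely many $r$) and take the $\liminf$ as $r \to 0^+$; continuity of the intrinsic volumes on the class of sets admitting a normal bundle handles the left-hand side while the right-hand side converges to the stated expression with $M_R(X)$ replaced by $\liminf_{r \to 0} M_R(X^r)$.

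The main obstacle is the interleaving step: \Cref{thm:flow} supplies only an \emph{approximate} inverse flow, so I must verify carefully that the homotopy $t \mapsto C^\sigma(t,y)$ remains inside $X^{2\eps}_{a + c_\sigma}$ for every $t$, so that $\iota \circ r_\sigma$ is homotopic to the structural inclusion \emph{within} the filtration and not merely within $X^{2\eps}$. A secondary subtlety is that the filtered inclusion $\iota: H_*(X_\bullet) \to H_*(X^{2\eps}_\bullet)$ need not be a monomorphism of persistence modules, which rules out the sharper injection form of \Cref{lem:averaging} and explains why the final bound involves the curvature masses of both $X$ and $X^{2\eps}$ rather than just one of them.
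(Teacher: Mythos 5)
Your overall pipeline (flow-induced interleaving, the non-injective case of \Cref{lem:averaging}, Legendre coefficient extraction) matches the paper's, and your interleaving between $H_*(X_\bullet)$ and $H_*(X^{2\eps}_\bullet)$ is correct as far as it goes. But there is a genuine gap at the step where you apply \Cref{lemma:number_crit} ``to $X$ itself, whose proof adapts thanks to $\reach(X)>0$.'' That lemma is stated and proved only for sets of the form $X^{2\eps}$ with $0<2\eps<\reach_\mu(X)$: its Morse-theoretic core explicitly uses that the set is an offset of $X$ at a regular value of $d_X$, which is the setting in which the generalized Morse theory of tubular neighborhoods applies and in which one gets the bound $N_0^R(D^x)\leq \one(x)+\mathrm{card}\,\pi^{-1}(x)$ for a.e.\ $x$. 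A general positive-reach set is not presented as such an offset, and the paper deliberately avoids your shortcut: it compares $X^{2\eps}$ with $X^{\delta}$ for small $\delta>0$ (so that \Cref{lemma:number_crit} legitimately applies to both), and then sends $\delta\to 0$. This forces the real technical work of the paper's proof, which is entirely absent from yours: showing $\lim_{\delta\to 0}M_R(X^{\delta})=M_R(X)$ via the map $h_\delta:(x,n)\mapsto(x+\delta n,n)$, the curvature transformation $\kappa\mapsto \kappa/(1+\delta\kappa)$, a Jacobian estimate and dominated convergence, together with $V_i(X^{\delta})\to V_i(X)$ from the tube formula. If you want to keep your direct route you must actually re-derive the Morse-theoretic inequality of \Cref{lemma:number_crit} for arbitrary compact sets with positive reach; asserting that it ``adapts'' is not enough.

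The subanalytic case in your proposal is also incorrect as written. For subanalytic $X$, all but finitely many offsets $X^r$ have positive $\mu$-reach for some $\mu\in(0,1]$, \emph{not} positive reach, so you cannot apply the positive-reach statement to $X^r$. Moreover, ``continuity of the intrinsic volumes on the class of sets admitting a normal bundle'' is not a result available here (intrinsic volumes are Hausdorff-continuous only on convex sets); the paper instead obtains $V_i(X^{\delta})\to V_i(X)$ from the flat-norm convergence of the normal cycles $N_{X^{\delta}}\to N_X$ guaranteed by Fu's theory of subanalytic auras, and then reads off the claim from the already-established inequality between $V_i(X^{2\eps})$ and $V_i(X^{\delta})$ (note that in the paper's scheme the term $M_R(X^{2\eps})$ is fixed as $\delta$ varies, whereas your scheme makes it $M_R(X^{r+2\eps})$ and thus requires yet another unproved convergence). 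A secondary, fixable imprecision: with your hypotheses $\eps<\frac12\reach_\mu(X)$ you should take the flow on the slab $d_X^{-1}(0,2\eps]$, which is indeed covered since $2\eps<\reach_\mu(X)$; that part is fine.
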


\begin{proof}
 Let $x \in \R^d$ and $\delta > 0$. For any sufficiently small $\sigma > 0$, set $c = \frac{2\eps - \delta}{\mu - \sigma}$. By the $\mu$-reach hypothesis, there exists a continuous flow between $X^{2\eps}$ and $X^{\delta}$ which is $c$-Lipschitz in the time parameter thanks to \Cref{thm:flow}. This yields the following commutative diagram:
\[\begin{tikzcd}
	{} & {H_*(X^{2\varepsilon}_a)} & {H_*(X^{2\varepsilon}_{a + c})} & {H_*(X^{2\varepsilon}_{a + 2c})} & {} \\
	{} & {H_*(X^{\delta}_a)} & {H_*(X^{\delta}_{a + c})} & {H_*(X^{\delta}_{a + c})} & {}
	\arrow[from=2-2, to=1-2]
	\arrow[from=1-2, to=2-3]
	\arrow[from=2-2, to=2-3]
	\arrow[from=1-2, to=1-3]
	\arrow[from=2-3, to=1-3]
	\arrow[from=1-3, to=1-4]
	\arrow[from=2-3, to=2-4]
	\arrow[from=1-3, to=2-4]
	\arrow[dashed, from=1-1, to=1-2]
	\arrow[dashed, from=2-1, to=2-2]
	\arrow[dashed, from=1-4, to=1-5]
	\arrow[dashed, from=2-4, to=2-5]
	\arrow[from=2-4, to=1-4]
\end{tikzcd}\]
This gives a $\frac{2\eps - \delta}{\mu - \sigma}$ interleaving between the two persistence modules, which implies
$d_B(D^x_{X^{2\eps}}, D^x_{X^{\delta}}) \leq \frac{2\eps}{\mu}$ by letting $\sigma$ go to zero.
Using the same reasoning as in the proof of \Cref{thm:persistent_rate}, except that we have to use the first inequality of \Cref{lem:averaging} since a priori there is no injection between the two diagrams $D^x_X$ and $D^x_{X^{2\eps}}$, we get for any positive $R$:
\begin{equation}
\label{eq:inegalite_minioffset}
    \module{V_i(X^{2\eps}) - V_i(X^{\delta})} \leq \frac{\eps P(i,d)}{\mu R^{i+1}}  \int_{\R^d} (N_0^R(D^x_{X^{2\eps}}) + N_0^R(D^x_{X^{\delta}})) \diff x.
\end{equation}
 By \Cref{lemma:number_crit}:
\begin{equation}
\label{eq:inegalite_MR}
    \module{V_i(X^{2\eps}) - V_i(X^{\delta})} \leq \frac{\eps P(i,d)}{\mu R^{i+1}}  \left ( \Vol(X^{2\eps}) + M_R(X^{2\eps}) + \Vol(X^{\delta}) + M_R(X^{\delta })\right). 
\end{equation}
We already know that $\Vol(X^{\delta})$ converges to $\Vol(X)$. 
Let us prove the first statement and assume that $\reach(X) > 0$. By the tube formula, the intrinsic volumes of $X^{\delta}$ are the (scaled) coefficients of the Steiner polynomial of $X$ translated by $\delta$, and thus $V_i(X^{\delta})$ converges  to $V_i(X)$ when $\delta$ goes to zero.
We are left to prove that $\lim_{\delta \to 0} M_R(X^{\delta}) = M_R(X)$ to obtain the desired inequality. If $0 \leq \delta < \reach(X)$, writing $h_{\delta} : (x,n) \mapsto (x + \delta n, n)$, we have:
\[ \Nor(X^{\delta}) = \bigcup_{(x,n) \in \Nor(X)} \{ x +\delta n \} \times \{ n \} = h_{\delta}(\Nor(X)).\]

Denoting by $\kappa_i(z,n)$ the principal curvatures of $X$ at a regular pair $(z,n)$ of $X$ and $\kappa_{\delta,i}(z',n')$ the principal curvatures of $X^{\delta}$ at a regular pair $(z',n')$ of $\Nor(X^{\delta})$, we have: \[\kappa_{\delta,i}(z + \delta n, n) = f_{\delta}(\kappa_i(z,n)), \] where $f_{\delta}(s) = \frac{s}{1 +\delta s}$. 
Since $\kappa_i(x,n) \geq - \frac{1}{\reach(X)}$ the quantities $f_{\delta}(\kappa_i(x,n))$ are well-defined for any $\delta < \reach(X)$. The change of variable formula yields for any positive $t$:
\begin{align*}
M_R(A^{\delta}) = &
    \int_{\Nor(A^{\delta})} \prod_{i=1}^{d-1}\frac{\module{1 + t \kappa_{\delta,i}}}{\sqrt{1 + \kappa^2_{i, \delta}}} \diff \mathcal{H}^{d-1}(x,n) \\
    = & \int_{\Nor(A)} J_{d-1}(h_{\delta}) \prod_{i=1}^{d-1} \frac{\module{1 + t f_{\delta}(\kappa_i)}}{\sqrt{1 + f_{\delta}(\kappa_i)^2}} \diff \mathcal{H}^{d-1} (x,n).
\end{align*}
Since $h_{\delta} - \Id$ is $\delta$-Lipschitz, we have \[ (1 - \delta)^{d-1} \leq J_{d-1}(h_{\delta}(x,n)) \leq (1 + \delta)^{d-1}. \] Hence, as $\delta$ tends to 0, $J_{d-1} h_{\delta}$ tends to $1$ and $f_{\delta}(\kappa)$ tend to $\kappa$ for all $\kappa \in \R$.
Lebesgue dominated convergence theorem gives:
\begin{align*}
\lim_{\delta \to 0} M_R(X^{\delta}) = & \int_{0}^R\int_{\Nor(A)} \prod_{i=1}^{d-1} \frac{\module{1 + t \kappa_i}}{\sqrt{1 + \kappa_i^2}} \diff \mathcal{H}^{d-1}(x,n) \diff t \\
 = & \; M_R(X). 
\end{align*}

For the second claim, assuming $X$ is subanalytic, we make use of the theory of normal cycles of subanalytic sets developed in \cite{FuSubAnalytic}. The function $d_X$ is a subanalytic aura of $X$ and as such we have convergence of the normal cycles $N_{X^{\delta}}$ of $X^{\delta}$ to that of $X$ for the flat norm as $\delta$ tends to 0. In particular, as intrinsic volumes are the normal cycles integrated against Lipschitz-Killing forms, we have for every $ 0 \leq i \leq d-1$, $\displaystyle \lim_{\delta \to 0} V_i(X^{\delta}) = V_i(X)$
and the last claim is a consequence of \Cref{eq:inegalite_MR} as $\delta$ tends to 0.
\end{proof}

Combining \Cref{thm:ConvIntrOffset} and \Cref{thm:persistent_rate} gives us a way to estimate $V_i(X)$ from $Y$:

\begin{corollary}[Linear rate of approximation for the intrinsic volumes]
\label{cor:cor_approx}
Let $X, Y \subset \R^d$ and $\mu, \eps > 0$ be such that $d_H(X,Y) \leq \eps < \frac{1}{4}\reach_{\mu}(X)$.
If ${\reach(X) > 0}$, we have:
\begin{equation}
    \module{V_i(X) - V^{\eps, R}_i(Y)} \leq \frac{\eps P(i,d)}{\mu R^{i+1}} \left ( \Vol(X) + 2\Vol(X^{2\eps}) + M_R(X) + 2M_R(X^{2\eps}) \right ).
\end{equation}
\end{corollary}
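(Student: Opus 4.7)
The plan is a one-line argument by the triangle inequality, reducing the corollary to the two theorems that immediately precede it. Specifically, I would write
\begin{equation*}
\module{V_i(X) - V^{\eps,R}_i(Y)} \leq \module{V_i(X) - V_i(X^{2\eps})} + \module{V_i(X^{2\eps}) - V^{\eps,R}_i(Y)},
\end{equation*}
and then bound each term separately by the corresponding main theorem.

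First I would invoke \Cref{thm:persistent_rate}, which under the hypothesis $d_H(X,Y) \leq \eps \leq \tfrac{1}{4}\reach_\mu(X)$ yields
\begin{equation*}
\module{V^{\eps,R}_i(Y) - V_i(X^{2\eps})} \leq \frac{\eps P(i,d)}{\mu R^{i+1}}\left(\Vol(X^{2\eps}) + M_R(X^{2\eps})\right).
\end{equation*}
Next I would invoke \Cref{thm:ConvIntrOffset}, whose hypothesis $\eps < \tfrac{1}{2}\reach_\mu(X)$ is strictly weaker than ours, together with the assumption $\reach(X) > 0$, giving
\begin{equation*}
\module{V_i(X) - V_i(X^{2\eps})} \leq \frac{\eps P(i,d)}{\mu R^{i+1}}\left(\Vol(X) + \Vol(X^{2\eps}) + M_R(X) + M_R(X^{2\eps})\right).
\end{equation*}
Summing these two inequalities and collecting terms produces exactly the stated bound, with the factor $2$ in front of $\Vol(X^{2\eps})$ and $M_R(X^{2\eps})$ arising from the fact that the offset quantities appear in both bounds.

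There is no real obstacle here: all the geometric and analytic work has been done in \Cref{thm:persistent_rate} (controlling the persistent estimator against the intrinsic volumes of the offset $X^{2\eps}$) and in \Cref{thm:ConvIntrOffset} (controlling the intrinsic volumes of the offset against those of $X$ itself). The only subtlety worth noting in the write-up is that the hypotheses of the two results are compatible: the condition $d_H(X,Y) \leq \eps \leq \tfrac{1}{4}\reach_\mu(X)$ implies $\eps < \tfrac{1}{2}\reach_\mu(X)$, so \Cref{thm:ConvIntrOffset} applies without any loss, and the positivity of $\reach(X)$ is exactly what lets us pass to the limit $\delta \to 0$ in that theorem rather than resorting to the weaker subanalytic version involving $\liminf_{r \to 0} M_R(X^r)$.
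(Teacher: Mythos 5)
Your proof is correct and is exactly the paper's argument: the paper states the corollary as an immediate combination of \Cref{thm:persistent_rate} and \Cref{thm:ConvIntrOffset} via the triangle inequality, and your bookkeeping of the hypotheses and of the factor $2$ on the offset terms matches the stated bound.
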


We prove \Cref{thm:milnor_2} using similar methods as before, except that the interleavings between persistence modules stem from the existence of a $(\eps, \delta)$-homotopy equivalence.

\begin{proof}[Proof of \Cref{thm:milnor_2}]
Let $x$ be any point of  $\R^d$ and let $M_a \coloneqq M \cap B(x,a)$ for any subset $M$ of $\R^d$ and $a \in \R$. The map
$H_1 : [0,1]\times Y \to Y$ is a homotopy between $f \circ g$ and $\Id_Y$, and by assumption its restriction to $Y_a$ is a continuous map $H^a_1 : [0,1] \times Y_a \to Y_{a+ 2\delta}$. This yields a homotopy between $f \circ g : Y_a \to Y_{a+2\delta}$ and the inclusion $Y_a \xhookrightarrow{} Y_{a+2\delta}$. Letting $\delta'$ be a positive real such that $2\eps + 2\delta' \geq 2 \delta$, we obtain this commutative diagram of continuous maps:
\[\begin{tikzcd}
	& {X_{a+ \varepsilon}} & {X_{a +\varepsilon + \delta'}} \\
	{Y_a} &&& {Y_{a + 2 \varepsilon + \delta'}} & {Y_{a + 2 \varepsilon + 2\delta'}}
	\arrow[hook, from=1-2, to=1-3]
	\arrow["g"', from=1-3, to=2-4]
	\arrow["\psi", shift left=2, from=1-3, to=2-5]
	\arrow["f", from=2-1, to=1-2]
	\arrow["\phi"', from=2-1, to=1-3]
	\arrow[hook, from=2-4, to=2-5]
\end{tikzcd}\]
Since the same holds symmetrically for $H^a_2$, we can apply the homology functor to obtain the following commutative diagram thanks to the homotopy between $\psi \circ \phi$ (resp. $\phi \circ \psi$) and the inclusion $Y_a \xhookrightarrow{} Y_{a+ 2\eps + 2\delta'}$ (resp. $X_a \xhookrightarrow{} X_{a+ 2\eps + 2\delta'}$):
\[\begin{tikzcd}
	{} & {H_*(X_a)} & {H_*\left (X_{a +\varepsilon + \delta'} \right)} & {H_*\left (X_{a +2\varepsilon + 2\delta'} \right)} & {} \\
	{} & {H_*(Y_a)} & {H_*\left (Y_{a +\varepsilon + \delta'} \right)} & {H_* \left(Y_{a +2\varepsilon + 2\delta'} \right)} & {}
	\arrow[dashed, from=1-1, to=1-2]
	\arrow[from=1-2, to=1-3]
	\arrow[from=1-2, to=2-3]
	\arrow[from=1-3, to=1-4]
	\arrow[from=1-3, to=2-4]
	\arrow[dashed, from=1-4, to=1-5]
	\arrow[dashed, from=2-1, to=2-2]
	\arrow[from=2-2, to=1-3]
	\arrow[from=2-2, to=2-3]
	\arrow[from=2-3, to=1-4]
	\arrow[from=2-3, to=2-4]
	\arrow[dashed, from=2-4, to=2-5]
\end{tikzcd}\]
Optimizing on $\delta'$, this yields $d_B(D^x_X, D^x_Y) \leq \max(\eps, \delta)$ for any $x \in \R^d$. Now following the same line of reasoning as the proof of \Cref{thm:ConvIntrOffset}, first bounding the $L^1$ norm of $Q_X - Q_Y$, and then retrieving its coefficients, we obtain the bound:
    \[\module{V_i(X) - V_i(Y)} \leq \frac{\max(\eps, \delta) P(i,d)}{R^{i+1}}\left ( \Vol(X) + M_R(X) + \Vol(Y) + M_R(Y) \right ). \]
\end{proof}

\begin{remark}[]The bounds obtained in \Cref{thm:persistent_rate}, \Cref{thm:ConvIntrOffset}, \Cref{cor:cor_approx} and \Cref{thm:milnor_2} depend on the parameter $R$ and explode when $R \to 0$ or $\infty$. While this is not critical for the asymptotic rate, it could be interesting to find ways of guessing good values of $R$ a priori. The main results stated in the introduction are obtained taking $R = 1$ and using \Cref{prop:bound}.
\end{remark}

\section{Computing persistent intrinsic volumes}
\label{sec:MonteCarlo}
We conclude this paper by discussing how one may compute our estimators in practice, assuming $Y$ is given as a finite set of points.
From the proof of \Cref{thm:persistent_rate} the $V^{\eps,R}_i(Y)$ are given by: 
\begin{equation*}
V^{\eps, R}_i(Y) =  \int_0^R \int_{\R^d} \chi(D^{\eps,x}_Y(r)) S_{i,d}(r)  \diff x \diff r,   
\end{equation*} 
where $S_{i,d}$ is a polynomial.

Computing this integral exactly would involve computing a $d$-dimensional family of persistence diagrams, which is computationally daunting. We may however easily approximate $V^{\eps,R}_i(Y)$ with arbitrary accuracy using a Monte-Carlo method by sampling points $x$ uniformly in the $R$-offset of $Y$ and computing the image persistence diagrams of their distance function $d_x$ using e.g \cite{ImagePersistentComputation}. The number of trials required to reach accuracy $\delta$ is of the order of $\delta^{-2}$ times the variance of the random variable $\displaystyle V = \int_0^R \chi(D^{\eps,x}_Y(r)) S_{i,d}(r) \diff r$.

The variance of $V$ is bounded by $\sup \module{V}^2$ and we have: 
\begin{equation*}
\sup \module{V} \leq \sup_{x \in \R^d} N_0^R(D^{\eps,x}_Y)  \sup_{r \in [0,R]}  R S_{i,d}(r) 
\end{equation*}
that is, a polynomial in $R$ times the maximum size of the image persistence diagram of $d_x$. Now by \Cref{prop:approx_diag}, the sizes of these image persistence diagrams are at most the sizes of the persistence diagrams of $d_{x_{|X^{2\eps}}}$.

One situation where we can uniformly bound the size of these diagrams is when $X$ is a semi-algebraic set, since by Thom-Milnor \cite{Milnor} the number of critical points of $d_{x_{|X^{2\eps}}}$ is bounded by a function of the degrees of the polynomial equalities and inequalities defining $X$. Another bound is provided by \Cref{rmk:well-defined}. This bound is pessimistic in general as the smallest simplicial complex sandwiched between $Y^{\eps}$ and $Y^{3\eps}$ may have a large number of simplices, yet one expects the number of critical points of $d_{x_{|C}}$ to be much smaller on average. We leave more in-depth investigations of the computational aspects of our method for future work.

\section*{Acknowledgement}
The authors are thankful to Mathijs Wintraecken for his advices on redacting the present document.

\bibliography{ref.bib}

\end{document}